\numberwithin{figure}{section}
\newcommand{\bbZ}{{\mathbb Z}}
\newcommand{\bbQ}{{\mathbb Q}}
\newcommand{\bbL}{{\mathbb L}}
\newcommand{\C}{{\mathbb C}}
\newcommand{\N}{{\mathbb N}}
\newcommand{\Z}{{\mathbb Z}}
\newcommand{\Q}{{\mathbb Q}}
\numberwithin{equation}{section}
\newtheorem{theorem}{\textbf{Theorem}}
\newtheorem{lemma}[theorem]{\textbf{Lemma}}
\newtheorem{corollary}[theorem]{\textbf{Corollary}}
\numberwithin{theorem}{section}
\newtheorem*{conjecture}{Conjecture}
\newtheorem*{question}{Question}
\newtheorem*{definition}{Definition}
\theoremstyle{remark}
\newtheorem*{remark}{Remark}
\renewenvironment{proof}[1][Proof]{\begin{trivlist}
\item[\hskip \labelsep {\bfseries #1:}]}{\qed\end{trivlist}}
\newcommand{\condcomment}[2]{\ifthenelse{#1}{#2}{}}
\begin{document}  

\author[P. Guerzhoy]{Pavel Guerzhoy}
\author[Z. Kent]{Zachary A. Kent}
\address{ 
	Department of Mathematics,
	University of Hawaii, 
	2565 McCarthy Mall, 
	Honolulu, HI,  96822-2273 
}
\email{pavel@math.hawaii.edu}
\email{zach@math.hawaii.edu}
\author[L. Rolen]{Larry Rolen}
\address{
	Mathematical Institute,
	University of Cologne,
	Weyertal 86-90,
	50931 Cologne, Germany
}
\email{lrolen@math.uni-koeln.de}
\title[Congruences for quantum modular forms]{Congruences for Taylor expansions of quantum modular forms}
\thanks{The research of first author is supported by the Simons Foundation Collaboration Grant.
This research was conducted while the first author was a guest at MPIM, and he is grateful
to the Institute for making this research possible. The first author thanks Don Zagier for
an elucidating communication. The third author thanks the University of Cologne and the DFG for their generous support via the University of Cologne postdoc grant DFG Grant D-72133-G-403-151001011, funded under the Institutional Strategy of the University of Cologne within the German Excellence Initiative. The authors are also grateful to Kathrin Bringmann,  Byungchan Kim, and Armin Straub for useful comments which improved the paper. }

\begin{abstract}
Recently, a beautiful paper of Andrews and Sellers has established linear congruences for the Fishburn numbers modulo an infinite set of primes. Since then, a number of authors have proven refined results, for example, extending all of these congruences to arbitrary powers of the primes involved. Here, we take a different perspective and explain the general theory of such congruences in the context of an important class of quantum modular forms. As one example, we obtain an infinite series of combinatorial sequences connected to the ``half-derivatives" of the Andrews-Gordon functions  and with Kashaev's invariant on $(2m+1,2)$ torus knots, and we prove conditions under which the sequences satisfy linear congruences modulo at least $50\%$ of primes.
\end{abstract}

\maketitle

\section{Introduction and statement of results}
In his seminal 2010 Clay lecture, Zagier defined a new class of functions with certain automorphic properties called ``quantum modular forms'' \cite{Zagier-Clay}. Roughly speaking, these are complex valued functions defined on the rational numbers which have modular transformations modulo ``nice'' functions. Although the definition is (intentionally) vague, Zagier gave a handful of motivating examples to serve as prototypes. For example, he defined quantum modular forms related to Maass cusp forms attached to Hecke characters of real quadratic fields, as studied by Andrews, Dyson and Hickerson \cite{AndrewsDysonHickerson} and Cohen \cite{Cohen}, and he gave examples related to sums over quadratic polynomials and non-holomorphic Eichler integrals. More precisely, Zagier made the following definition.
\begin{definition} A \textbf{quantum modular form} is a function $f\colon\mathbb{P}^1(\Q)\rightarrow\C$ for which $f(x)-f|_k\gamma(x)$ is ``suitably nice.''
\end{definition}
Here $|_k$ is the usual Petersson slash operator and ``suitably nice'' means that the obstruction to modularity satisfies an appropriate analyticity condition, e.g. $\mathcal{C}^k,\mathcal{C}^{\infty}$, etc. One of the most striking examples of a quantum modular form is described in Zagier's paper on Vassiliev invariants \cite{Zagier}, in which he studies Kontsevich's function $F(q)$ given by

\begin{equation}F(q):=\sum_{n=0}^{\infty}(q;q)_n,\end{equation}
where $(a,q)_n:=\prod_{j=0}^{n-1}(1-aq^j)$.

This function does not converge on any open subset of $\C$, but converges as a finite sum for $q$ any root of unity. Zagier's study of $F$ depends on the ``sum of tails'' identity 
\begin{equation}
\label{sumoftails}
\displaystyle\sum_{n\geq0}\left(\eta(q)-q^{\frac1{24}}\left(q;q\right)_n\right)=\eta(q)D\left(q\right)-\frac12\sum_{n\geq1}n\chi_{12}(n)q^{\frac{n^2-1}{24}},
\end{equation}
\noindent
where $\eta(q):=q^{\frac1{24}}(q;q)_{\infty}$, $D(q):=-\frac12+\sum_{n\geq1}\frac{q^n}{1-q^n},$ and $\chi_{12}(n):=\left(\frac{12}{n}\right)$. Recalling the identity $(q;q)_{\infty}=\sum_{n\geq1}\chi_{12}(n)q^{\frac{n^2-1}{24}}$, we find that the last term on the right hand side of (\ref{sumoftails}) is essentially a ``half-derivative'' of $\eta$.
Zagier further observed that $\eta(q)$ and $\eta(q)D(q)$ vanish to infinite order as $q$ approaches a root of unity. Thus, we have what Zagier terms a ``strange identity'' of the shape
\begin{equation}\label{KontsevichStrange}F(q)``="-\frac12\sum_{n\geq1}n\chi_{12}(n)q^{\frac{n^2-1}{24}}.\end{equation}
Although the left and right hand sides of (\ref{KontsevichStrange}) do not ever converge simultaneously, as we will see below, this identity can be interpreted as saying that there is an equality between asymptotic expansions when $q=e^{-t}$ as $t\rightarrow0^+$ (a similar statement holds as $q$ approaches other roots of unity as well).

Since \cite{Zagier-Clay}, there has been an explosion of research aimed at constructing examples of quantum modular forms related to Eichler integrals, extending the initial applications to knot invariants and quantum invariants of 3-manifolds in \cite{LawrenceZagier} and \cite{Zagier}. For instance, such quantum modular forms are closely tied to surprising identities relating generating functions of ranks, cranks, and unimodal sequences \cite{Folsom-Ono-Rhoades}, are related to probabilities of integer partition statistics \cite{NgoRhoades}, and arise in the study of negative index Jacobi forms and Kac-Wakimoto characters \cite{Bringmann-Creutzig-Rolen}. For further examples, see also \cite{Susie,BrysonOnoPitmanRhoades,BringmannRolenRadial,Costa,HikamiDecompWRT,HikamiLovejoy,LiNgoRhoades,RolenSchneieder}. 

The general theory of this class of quantum modular forms was further elucidated by Choi, Lim, and Rhoades in \cite{ChoiLimRhoades}, and from a different perspective by Bringmann and the third author in \cite{BringmannRolenConference}, where the space of ``Eichler quantum modular forms" was defined. In particular, for each half-integral weight cusp form there is an associated Eichler integral with quantum modular properties. At the end of \cite{BringmannRolenConference}, a program was laid out to study the general properties of these quantum modular forms. In particular, one of the fundamental problems in the theory was identified to be the determination of the arithmetic properties of such forms. This problem was inspired by recent work of Andrews and Sellers \cite{AndrewsSellers} in which they studied the congruence properties of the Fishburn numbers defined by
\[\sum_{n\geq0}\xi(n)q^n:=\sum_{n\geq0}\left(1-q;1-q\right)_n.\] 

These numbers are important in combinatorics and knot theory and in particular $\xi(n)$ enumerates the number of linearly independent Vassiliev invariants of degree $n$ \cite{Zagier}. For related results on the conbinatorics and asymptotic analysis of the Fishburn numbers and related sequences, see also \cite{AndrewsJelinek,Bousquet-Melou,BringmannLiRhoades,Chmutov,Claesson,Fishburn1,Jelinek,Levande,Stoimenow}. Utilizing beautiful and clever manipulations, Andrews and Sellers proved an infinite family of congruences for the Fishburn numbers, and these results were extended in several directions by Garvan \cite{Garvan}, Straub \cite{Straub}, and Ahlgren and Kim \cite{AhlgrenKim}. For example, the work of Andrews and Sellers and Straub implies that for all $A,n\in\N$, we have

\[\xi\left(5^An-1\right)\equiv\xi\left(5^An-2\right)\equiv0\pmod{5^A},\]
\[\xi\left(7^An-1\right)\equiv0\pmod{7^A},\]
\[\xi\left(11^An-1\right)\equiv\xi\left(11^An-2\right)\equiv\xi\left(11^An-3\right)\equiv0\pmod{11^A}.\]

In the context of quantum modular forms, the Fishburn numbers are the coefficients of the power series expansion at the root of unity $q=1$ of the Eichler quantum modular form associated to $\eta$. In light of this connection,
Garvan and Straub posed the following natural question.
\begin{question}[Garvan, Straub]
What is the general theory of such congruences for more general quantum modular forms?
\end{question}

In this paper, we answer this question for a class of quantum modular forms. In particular, we consider quantum modular forms which arise as Eichler integrals of unary theta series. That is, we consider the quantum modular forms associated to theta series of the form 
\[\sum_{n\in\Z}\chi(n)n^{\nu}q^{\frac{n^2-a^2}b},\]
where $\nu\in\{0,1\}$, $\chi$ is a periodic sequence of mean value zero, and $a,b\in\Z$. For the sake of definiteness, we will focus our attention on the case when $\nu=0$, i.e., when the unary theta series has weight $1/2$. We will shortly see how to define a ``Fishburn-type'' sequence associated to any such unary theta series via a series expansion as $q\rightarrow1$. We remark in passing that the analogous study of congruences for general modular forms of half-integral weight does not make sense as we do not know any algebraicity results about the associated power series coefficients.

Before stating our main result, we define a condition on the periodic sequences $\chi$ under consideration. This condition arises very naturally from the perspective of $p$-adic measures below, and is also satisfied by our canonical family of examples arising from knot invariants (see Section \ref{ExampleS}). Specifically, our condition on $\chi$ is as follows, where
\[\psi_u(x):=\begin{cases}
1&\text{ if }x\equiv u \pmod M,\\
0&\text{ otherwise},
\end{cases}
\]
and $\psi_{u,v}(x):=\psi_u(x)-\psi_v(x)$.

\begin{definition}
Let $\chi\colon\Z\rightarrow\{0,1,-1\}$ be a periodic function with period $M$ and mean value zero.
We say that $\chi$ is a {\bf good function} if it is a sum of functions of the form $\psi_{u,v}$ such that there exist natural numbers $C:=C(u,v)$ with $(C,M)=1$ and $uC\equiv v\pmod M$. 
\end{definition}
\begin{remark}
The condition for $\chi$ to be good is trivially satisfied whenever $\chi$ is supported on integers coprime to $M$. In particular, any quadratic Dirichlet character is good. 
\end{remark}
To describe our main result, we first consider for any $a,b\in\Z$ and any periodic sequence $\chi$ the following partial theta function:
 \[P_{a,b,\chi}(q):=\sum_{n\geq0}n\chi(n)q^{\frac{n^2-a^2}{b}}.\]
Note that this is essentially a ``half-derivative'' of the series $\sum_{n}\chi(n)q^{\frac{n^2-a^2}{b}}$. From the general theory outlined in \cite{BringmannRolenConference}, $\chi$ has mean value zero in particular whenever $P_{a,b,\chi}$ is a half-derivative of a modular form which is cuspidal at $q=1$. From now on, we will always assume that $P_{a,b,\chi}\in\Z[[q]]$. As we will see in the proof of Theorem \ref{xinumberlval}, the mean value zero property of $\chi$ implies that there is an asymptotic expansion 
 \[P_{a,b,\chi}\left(e^{-t}\right)\sim\sum_{n\geq0}\alpha_{a,b,\chi}(n)t^n\]
for some $\alpha_{a,b,\chi}(n)\in\C$ as $t\rightarrow0^+$. Thus, we may define coefficients $H_{a,b,\chi}(n)$ by the relation
\[P_{a,b,\chi}\left(e^{-t}\right)\sim\sum_{n\geq0}H_{a,b,\chi}(n)\left(1-e^{-t}\right)^n.\]
It is easily seen that the coefficients $H_{a,b,\chi}(n)$ are defined by finite recursions in terms of $\alpha_{a,b,\chi}(n)$. Moreover, the explicit formulas for the coefficients in Theorem \ref{xinumberlval} show that $H_{a,b,\chi}(n)\in\Q$ for all $n$. We will see below that in important examples arising from knot theory, they may also be defined combinatorially in a manner similar to the definition of the Fishburn numbers above.
 
 Our main result is then the following, where we define $\beta_p=\beta$ to be the coefficient of $p^1$ in the base $p$ expansion of $-\frac{a^2}b$.
 \begin{theorem}\label{mainthm}
Let $\chi$ be a good function and suppose that $a,b\in\Z$ are chosen so that $P_{a,b,\chi}\in\Z[[q]]$. Then for any prime $p$ not dividing $b$, the following are true.
\begin{enumerate}
\item If $B$ is a positive integer such that
		\[
			\left(\frac{a^2-b}{p}\right) =-1, \ \left(\frac{a^2-2b}{p}\right) =-1, \dots, \ \left(\frac{a^2-Bb}{p}\right) =-1,\]
			then for all $A,n\in\N$, we have
		\[
			H_{a,b,\chi}\left(p^A n - B\right) \equiv 0 \pmod {p^A}.
		\]
\item  If $\beta\neq (p-1)$ and $B$ is a positive integer such that
		\[
			\left(\frac{a^2-b}{p}\right) \neq 1, \ \left(\frac{a^2-2b}{p}\right) \neq 1, \dots, \ \left(\frac{a^2-Bb}{p}\right) \neq 1,\]
			then for all $A,n\in\N$, we have
		\[
			H_{a,b,\chi}\left(p^A n - B\right) \equiv 0 \pmod {p^A}.
		\]
\end{enumerate}
\end{theorem}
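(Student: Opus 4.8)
The plan is to realize the coefficients $H_{a,b,\chi}(n)$ as moments of a $p$-adic measure and to deduce the congruences from the $p$-adic behavior of that measure. First I would rewrite the asymptotic expansion of $P_{a,b,\chi}(e^{-t})$ in a form amenable to $p$-adic interpolation: since $\chi$ has mean value zero, the partial theta function $\sum_{n\ge0}n\chi(n)q^{(n^2-a^2)/b}$ has an asymptotic expansion at $q=1$ whose coefficients $\alpha_{a,b,\chi}(n)$ are expressible, via the Euler--Maclaurin formula, in terms of special values of $L$-functions twisted by $\chi$ (this is what Theorem~\ref{xinumberlval} provides, with $H$'s given by explicit rational formulas). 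The key structural point is that the generating function $\sum_n H_{a,b,\chi}(n)x^n$ should be interpretable as $\int_{\Z_p} (1+x)^{?} \, d\mu$ for a measure $\mu$ built from $\chi$ and the arithmetic progression governing $q^{(n^2-a^2)/b}$; concretely, because $\chi$ is a \emph{good} function, it is a $\Z$-combination of the differences $\psi_{u,v}$ with $uC\equiv v\pmod M$ and $(C,M)=1$, and each such $\psi_{u,v}$ contributes a measure which is (a translate/dilate of) a difference of two copies of the same elementary measure — so that differences of values at arguments related by multiplication by $C$ produce extra $p$-adic vanishing.

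The second step is to translate the condition $H_{a,b,\chi}(p^An-B)\equiv 0\pmod{p^A}$ into a statement about the measure. Writing $n\mapsto H_{a,b,\chi}(n)$ as the sequence of Taylor coefficients at $x=0$ of $\int (1+x)^{\lambda(w)}\,d\mu(w)$ for the appropriate linear form $\lambda$, extracting the coefficient of $x^{p^An-B}$ amounts to integrating a binomial-coefficient function $w\mapsto \binom{\lambda(w)}{p^An-B}$ against $\mu$. The standard $p$-adic fact is that $\binom{z}{p^An-B}$, as a function of $z\in\Z_p$, is divisible by $p^A$ \emph{unless} $z$ lies in a restricted residue class mod $p^A$; so the congruence will follow once we show that $\mu$ is supported away from those bad classes, equivalently that the relevant quadratic residue / base-$p$ digit conditions force $(n^2-a^2)/b$ (which controls $\lambda(w)$) to avoid the obstruction. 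This is exactly where the hypotheses enter: the chain $\left(\frac{a^2-jb}{p}\right)=-1$ for $j=1,\dots,B$ guarantees that there is no $n$ with $n^2\equiv a^2-jb\pmod p$, i.e. the exponent $(n^2-a^2)/b$ never hits the residues $-1,-2,\dots,-B\pmod p$, which is what is needed to kill the coefficient of $x^{p^An-B}$; the Legendre-symbol bookkeeping, together with the role of $\beta$ in part (2), handles the first $p$-adic digit and explains the exclusion $\beta\neq p-1$.

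Then the induction on $A$ proceeds by a standard lifting argument: knowing the result mod $p$, one shows the measure-theoretic vanishing propagates, using that $w\mapsto\binom{\lambda(w)}{p^An-B}$ gains one power of $p$ each time $A$ increases, provided the support condition is maintained — and the support condition is $A$-independent because it is a condition modulo $p$ on $n^2-a^2$. Finally, for part (1) we only need "$\not\equiv$ a square," while for part (2) the weaker hypothesis $\left(\frac{a^2-jb}{p}\right)\neq 1$ (allowing the value $0$) forces us to separately control the contribution of $n$ with $p\mid n^2-a^2+jb$; here the mean-value-zero property of $\chi$ on the relevant progression, again a consequence of goodness, combined with $\beta\neq p-1$, makes that boundary contribution vanish $p$-adically.

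The main obstacle I expect is Step two: setting up the correct $p$-adic measure and pinning down the precise linear form $\lambda$ and the exact support condition so that the quadratic-residue hypotheses translate cleanly into "$\mu$ avoids the bad residue classes." In particular, reconciling the $\psi_{u,v}$ decomposition of a good $\chi$ with the quadratic reindexing $n\mapsto (n^2-a^2)/b$ — so that the multiplier $C$ with $uC\equiv v$ genuinely produces the extra $p$-divisibility rather than just a formal identity — is the delicate part, and getting the edge case in part (2) right (the interplay of the value $0$ of the Legendre symbol, the digit $\beta$, and the mean-zero cancellation) will require care.
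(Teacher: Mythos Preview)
Your overall architecture is correct and matches the paper's: express $H_{a,b,\chi}(n)$ as an $L$-value (equivalently, as an integral against a $p$-adic measure), show that for good $\chi$ this ``integral'' preserves congruences on $\mathrm{supp}(\chi)$, and then verify that the integrand---which is $x\binom{(x^2-a^2)/b}{p^An-B}$ by Theorem~\ref{xinumberlval}---is divisible by $p^A$ pointwise on $\mathrm{supp}(\chi)$ under the Legendre-symbol hypotheses. The paper carries this out in exactly that order (Theorems~2.1, 2.2, Lemma~3.2, Theorem~3.3, Lemma~3.4), only phrasing the measure step in the elementary language of the operator $\mathbb{L}_\chi$ rather than explicit $p$-adic measures; it notes in a remark that the two viewpoints are equivalent.

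Two places where your sketch diverges from what actually works. First, the goodness of $\chi$ and the quadratic reindexing $n\mapsto(n^2-a^2)/b$ never interact: goodness is used \emph{only} to show that $\mathbb{L}_\chi$ (the ``integral'') respects congruences of numerical polynomials on $\mathrm{supp}(\chi)$---this is a statement about the measure alone, with no quadratic input---while the quadratic and binomial structure is analyzed separately via Kummer's carry-counting theorem. So the ``delicate reconciliation'' you anticipate does not arise; the two ingredients are orthogonal. Relatedly, no induction on $A$ is needed: Kummer's theorem gives $p^A\mid\binom{s+p\alpha}{p^An-B}$ for all $A$ in one stroke, because $p^An-B$ has base-$p$ digits $(p-B,p-1,\ldots,p-1,\ast,\ldots)$ with $A-1$ consecutive $(p-1)$'s.

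Second, and more substantively, your mechanism for part~(2) is wrong. The boundary case is $p\mid x$ (equivalently $p\mid n$ in your notation), and what saves you there is \emph{not} mean-value-zero cancellation but the explicit factor of $x$ sitting in the integrand $x\binom{(x^2-a^2)/b}{p^An-B}$: this $x$ contributes one power of $p$, and the binomial contributes the remaining $p^{A-1}$ provided the second base-$p$ digit of $(x^2-a^2)/b$---which for $p\mid x$ equals $\beta$, the second digit of $-a^2/b$---is not $p-1$. That is precisely the hypothesis $\beta\neq p-1$. The factor $x$ is there because the partial theta function is $\sum n\chi(n)q^{(n^2-a^2)/b}$, i.e.\ a half-derivative; if you try to run the argument via mean-zero cancellation of $\chi$ over a progression you will not find it.
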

\noindent
{\it Three remarks.}

\noindent 1) We will see in Section \ref{ExampleS} that Theorem \ref{mainthm} implies all known linear congruences for $\xi(n)$.

\smallskip
\noindent 2) We will give a stronger version of Theorem \ref{mainthm} in Theorem 3.3 below.

\smallskip

\noindent
3) As asked by Andrews and Sellers and Straub in the case of $\xi(n)$, it is an interesting question to determine a converse of Theorem \ref{mainthm} classifying linear congruences for the sequences $H_{a,b,\chi}(n)$. This seems difficult to prove using our techniques. From the perspective of $p$-adic measures outlined in Section \ref{Prelim}, Theorem \ref{padic} aligns with the intuitive notion that the ``integral'' of a function which is $p$-adically small over a compact set is small. However, the difficulty in proving a converse result lies in the fact that the integral of a large function can still be small.
\smallskip

In particular, we have the following corollary, which we will show in Section \ref{ExampleS} implies that infinitely many of the sequences arising in our family of examples from knot theory satisfy linear congruences modulo at least $50\%$ of primes.
\begin{corollary}
Let $\chi$ be a good function and suppose that $a,b\in\Z$ are chosen so that $P_{a,b,\chi}\in\Z[[q]]$, and that $a^2-b\not\equiv 3\pmod4$ and such that $a^2-b$ is not a square. Then there is a linear congruence for $H_{a,b,\chi}(n)$ modulo at least $50\%$ of primes.
\end{corollary}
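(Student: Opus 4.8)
The plan is to apply part~(1) of Theorem~\ref{mainthm} with $B=1$ and to check that its hypothesis holds for a set of primes of density exactly $\tfrac12$. Write $D:=a^2-b$. For $B=1$ the hypothesis of part~(1) is simply that $\left(\frac{D}{p}\right)=-1$; for such $p\nmid b$ the theorem yields $H_{a,b,\chi}(p^An-1)\equiv 0\pmod{p^A}$ for all $A,n\in\N$, which is precisely the asserted linear congruence. So the whole statement reduces to computing the density of the set $S:=\{p:\left(\frac{D}{p}\right)=-1\}$.

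First I would observe that, because $a^2-b=D$ is by hypothesis not a perfect square, $\Q(\sqrt{D})$ is a genuine quadratic field and the Kronecker symbol $p\mapsto\left(\frac{D}{p}\right)$ is a non-principal real character (of modulus dividing $4|D|$). For any non-principal real Dirichlet character the value $-1$ is attained on a set of primes of natural (and Dirichlet) density exactly $\tfrac12$; equivalently, by the Chebotarev density theorem, exactly half of all primes are inert in $\Q(\sqrt D)$. Deleting the finitely many primes dividing $b$ or $D$ does not change this density, so $\{p\nmid b:\left(\frac{D}{p}\right)=-1\}$ has density $\tfrac12$, and combining this with the previous paragraph proves that $B=1$ gives a linear congruence for $H_{a,b,\chi}$ valid for at least (indeed, exactly) $50\%$ of primes.

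To handle the small primes cleanly --- in particular $p=2$, which is admissible whenever $2\nmid b$, and the primes dividing $D$ --- it is more convenient to invoke part~(2) of Theorem~\ref{mainthm} (or its refinement Theorem~3.3) with $B=1$ rather than part~(1): its quadratic hypothesis only requires $\left(\frac{D}{p}\right)\neq 1$, which again holds on a density-$\tfrac12$ set, while the auxiliary condition $\beta_p\neq p-1$ holds for all but finitely many $p$ unless $b\mid a^2$, in which case one simply falls back on part~(1) (which makes no reference to $\beta_p$). The hypothesis $a^2-b\not\equiv 3\pmod 4$ is a mild technical condition entering at this stage; it serves to control $\left(\frac{D}{p}\right)$ at the prime $2$ and to align with the case analysis of the stronger theorem, so that the elementary density bookkeeping over the ramified and dyadic primes is uniform.

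I do not expect a serious obstacle: granting Theorem~\ref{mainthm}, this is essentially the classical fact that a quadratic field has a density-$\tfrac12$ set of inert primes. The one point that genuinely requires the stated hypotheses is the claim that this density equals $\tfrac12$ rather than something smaller, which is exactly the statement that $\left(\frac{D}{\cdot}\right)$ is nontrivial --- and this fails precisely when $a^2-b$ is a perfect square. It is also worth flagging (though not needed for the corollary) that no single linear congruence of this shape can reach more than $50\%$ of primes: for every prime $p$ that splits in $\Q(\sqrt D)$ one has $\left(\frac{a^2-b}{p}\right)=1$, so the hypotheses of both parts of Theorem~\ref{mainthm} already fail at the first step $k=1$, regardless of $B$.
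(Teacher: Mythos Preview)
Your proof is correct and follows essentially the same route as the paper: apply part~(1) of Theorem~\ref{mainthm} with $B=1$, reduce to $\left(\frac{a^2-b}{p}\right)=-1$, and cite Chebotarev/Dirichlet density using that $a^2-b$ non-square makes $\left(\frac{a^2-b}{\cdot}\right)$ a non-principal real character. Your third paragraph (invoking part~(2) to ``handle small primes'') is unnecessary and slightly muddled---density is unaffected by finitely many primes, and the paper simply stays with part~(1) throughout; likewise, the paper uses the hypothesis $a^2-b\not\equiv 3\pmod 4$ only to assert that $\left(\frac{a^2-b}{\cdot}\right)$ is a genuine Dirichlet character, not for any dyadic bookkeeping.
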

\begin{remark}
We will see in Section \ref{ExampleS} that the condition that $a^2-b$ is not a square is necessary, where we will give an example in which our theorem does not guarantee any congruences.
\end{remark}

We now highlight an important situation when the coefficients $H_{a,b,\chi}(n)$ may be defined without using asymptotic expansions of partial theta functions. To describe this situation, we require the Habiro ring, which was brilliantly studied by Habiro in \cite{Habiro}, and further connected to the $\mathbb F_1$ story and to the theory of Tate motives in \cite{Matilde}. This ring is defined as the completion
\[\mathcal H:=\lim_{\substack{\longleftarrow\\ n\geq0}}\Z[q]/((q;q)_n),\]
which may, as usual, be realized as the set of formal expansions of the form 
\[\sum_{n\geq0}a_n(q)(q;q)_n,\]
where $a_n\in\Z[q]$.
Associated to any element of the Habiro ring and to any root of unity $\zeta$ there is a power series expansion in $(\zeta-q)$ \cite{Habiro}. That is, there is a map 
\[\phi_\zeta\colon\mathcal H\rightarrow\Z[\zeta][[\zeta-q]].\]

\noindent
For example, the map $\phi_{1}$ may be realized explicitly (and be computed efficiently) for any $F=\sum_{n\geq0}a_n(q)(q;q)_n\in\mathcal H$ as the expansion $\sum_{n\geq0}c_n\left(1-q\right)^n$ by recursively solving for $c_n$ in the expansion
\[\sum_{n\geq0}a_n(1-q)(1-q;1-q)_n=\sum_{n\geq0}c_nq^n.\]
The resulting expressions defining $c_n$ always terminate, since $(1-q;1-q)_n=O\left(q^n\right)$. In particular, recalling the definition of $\xi(n)$, the Fishburn numbers are the coefficients of $\phi_1(F)$, where $F$ is Kontsevich's function.

A central result of Habiro (see Theorem 5.2 of \cite{Habiro}) states that the map $\phi_{\zeta}$ is injective for any $\zeta$, so that in fact, an element of the Habiro ring is uniquely determined by its power series expansion at any given root of unity. As Habiro points out, this is similar to the fact that a holomorphic function is determined by its power series expansion at a single point, so that $\mathcal H$ may be thought of as a ``ring of analytic functions on the roots of unity.''

Returning to the question of congruences, we now suppose that $P_{a,b,\chi}$ satisfies a ``strange identity'', i.e., that there is an $F_{a,b,\chi}\in\mathcal H$ such that 
\begin{equation}\label{StrangeAsymp}P_{a,b,\chi}\left(e^{-t}\right)\sim F_{a,b,\chi}\left(e^{-t}\right).\end{equation} Defining $c_{a,b,\chi}(n)$ as the coefficients of $\phi_1(F_{a,b,\chi})$, it directly follows from (\ref{StrangeAsymp}) and the definition of $H_{a,b,\chi}(n)$ that 
\[H_{a,b,\chi}(n)=c_{a,b,\chi}(n).\] 

Hence, we have given congruences for the coefficients of power series expansions of any element of the Habiro ring which satisfies a strange identity connecting it to a partial theta function $P_{a,b,\chi}$ satisfying the conditions of Theorem \ref{mainthm}. In Section \ref{ExampleS}, we study the half-derivatives of Andrews-Gordon functions, which Hikami showed in \cite{Hikami} satisfy strange identities. Furthermore, the resulting elements of the Habiro ring arise naturally in the context of Kashaev's invariant for the $(2m+1,2)$-torus knots and include Kontsevich's function as a special case.

More generally, it is very interesting to ask which partial theta functions satisfy strange identities, and what the general structure of such identities should be. For work related to this question, the interested reader is referred to important work of Coogan, Lovejoy, and Ono in \cite{CooganOno,LovejoyOno}, which studies connections between asymptotic expansions of partial theta series and $q$-hypergeometric series.

The paper is organized as follows. In Section \ref{Prelim}, we recall some standard facts on $p$-adic measures and $L$-functions, and we prove a fundamental result concerning congruences of polynomials in $L$-values, as well as a useful expression of the coefficients $H_{a,b,\chi}(n)$ in terms of $L$-values. Together, these results allow us to reduce the proof of Theorem \ref{mainthm} to an elementary statement on congruences for binomial coefficients, which we prove along with Corollary 1.2 in Section \ref{ProofMainThm}.
Finally, in Section \ref{ExampleS} we give an illuminating example of importance in knot theory, which also shows that Theorem \ref{mainthm} yields all known congruences for the Fishburn numbers, as well as new congruences for a class of sequences naturally generalizing the Fishburn numbers.

\section{Preliminaries}\label{Prelim}
\subsection{Congruences for polynomials in $L$-values}
In this subsection, we prove a useful theorem regarding the $p$-adic properties of certain polynomials in $L$-values. For any periodic sequence $\chi(n)$ of mean value zero, we define for $\operatorname{Re}(s)\gg0$ the $L$-function
\[L_{\chi}(s):=\sum_{n\geq1}\frac{\chi(n)}{n^s}.\]
This $L$-function has an analytic continuation to all of $\C$, which can easily be seen by writing $L$ as a linear combination of specializations of the Hurwitz zeta function and using the well-known continuation for the Hurwitz zeta function, which only has a simple pole of residue one at $s=1$. Thus, in particular, the values of $L(s)$ for $s\in-\N_0$ are well-defined.

In order to state our theorem, we first define a linear operator $\mathbb L_{\chi}\colon\Q[x]\rightarrow\C$ by its action on generators as 
\[\mathbb L_{\chi}\left(x^n\right):=L_{\chi}(-n),\]
and we recall that $f\in\Q[x]$ is called a \emph{numerical polynomial} on a set $X\subset\Z$ if $f(x)\in\Z$ for all $x\in X$. We also denote by $\mathrm{supp}(\chi)$ the support of $\chi$. Our main result concerning congruences of $L$-values is as follows.
\begin{theorem}\label{padic}
Let $\chi$ be a good function with period $M$, $p$ a prime with $(p,M)=1$, and $A\in\N$. If $f$ is a numerical polynomial on $\mathrm{supp}(\chi)$ such that for all $n\in\mathrm{supp}(\chi)$ we have
\[f(n)\equiv0\pmod{p^A},\]
then we also have
\[\mathbb L_{\chi}(f)\equiv0\pmod{p^A}.
\]
\end{theorem}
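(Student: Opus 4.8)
The plan is to realize the functional $\mathbb L_\chi$ as $p$-adic integration against a single $\Z_p$-valued measure $\mu_\chi$ on $\Z_p$; granting this, the hypotheses on $f$ become a uniform sup-norm bound and the conclusion is immediate. The first observation I would make is a density fact: since $p\nmid M$, every nonempty residue class modulo $M$ is dense in $\Z_p$ (given $z\in\Z_p$ and $N\ge 1$ one solves $n\equiv a\pmod M$ and $n\equiv z\pmod{p^{N}}$ simultaneously by the Chinese Remainder Theorem), hence so is $\mathrm{supp}(\chi)$. A polynomial $f\in\Q[x]$ is continuous on $\Q_p$, so numericality on $\mathrm{supp}(\chi)$ already forces $f(\Z_p)\subseteq\overline{f(\mathrm{supp}(\chi))}\subseteq\Z_p$, and the hypothesis $f(n)\in p^{A}\Z$ for $n\in\mathrm{supp}(\chi)$ upgrades this, by the same closure argument, to $f(\Z_p)\subseteq p^{A}\Z_p$, i.e. $\|f\|_{\Z_p}\le p^{-A}$.

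Next I would construct $\mu_\chi$. Writing $\langle a\rangle\in\{1,\dots,M\}$ for the least positive residue of $a$, one has $L_{\psi_a}(-n)=M^{\,n}\zeta(-n,\langle a\rangle/M)$, and substituting $e^{z}=1+T$ into the generating identity $\sum_{n\ge 0}\zeta(-n,x)\,z^{n}/n!=\tfrac1z-e^{xz}/(e^{z}-1)$ and summing against $\chi$ yields
\[
A_\chi(T):=-\,\frac{\sum_{a\bmod M}\chi(a)(1+T)^{\langle a\rangle}}{(1+T)^{M}-1}
\]
as the exponential generating series of the values $L_\chi(-n)$; here the term $\tfrac1{M\log(1+T)}$ occurring in the contribution of each $\psi_a$ enters with total coefficient $\sum_a\chi(a)=0$ and so disappears, which is where the mean-zero hypothesis is used. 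I would then take $A_\chi(T)$ as the Amice transform of $\mu_\chi$, so that $\int_{\Z_p}x^{n}\,d\mu_\chi=L_\chi(-n)$ for all $n\ge 0$ and, by linearity, $\mathbb L_\chi(g)=\int_{\Z_p}g\,d\mu_\chi$ for every $g\in\Q[x]$. The crucial claim is that $\mu_\chi$ is $\Z_p$-valued, equivalently $A_\chi(T)\in\Z_p[[T]]$: because $p\nmid M$ we may write $(1+T)^{M}-1=T\,U(T)$ with $U(T)\in\Z[T]$, $U(0)=M\in\Z_p^{\times}$, so $U$ is a unit of $\Z_p[[T]]$; and again $\sum_a\chi(a)=0$ makes the integer-coefficient polynomial $\sum_a\chi(a)(1+T)^{\langle a\rangle}$ divisible by $T$, so dividing through shows $A_\chi(T)\in\Z_p[[T]]$. (Conceptually this is the measure-theoretic reading of goodness to which the paper alludes: decomposing $\chi$ into the blocks $\psi_{u,v}$ with $(C,M)=1$ and $uC\equiv v\pmod M$ presents $\mu_\chi$ as a sum of Mazur-type regularized partial-zeta distributions.)

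Finally, combining the two halves: $\mu_\chi$ has total-variation norm at most $1$ and $\|f\|_{\Z_p}\le p^{-A}$, so the elementary estimate $\big|\int_{\Z_p}f\,d\mu_\chi\big|_p\le\|f\|_{\Z_p}\,\|\mu_\chi\|\le p^{-A}$ gives $\mathbb L_\chi(f)\in p^{A}\Z_p$; since $\mathbb L_\chi(f)$ is a rational number, this is precisely the asserted congruence. I expect the main obstacle to be the verification that $\mu_\chi$ is genuinely a $\Z_p$-valued measure and not merely a distribution: for $\chi$ not of mean zero this breaks down, since the term $\tfrac1{M\log(1+T)}$, whose coefficients are the Gregory--Cauchy numbers, has unbounded $p$-adic denominators; the mean-zero and $p\nmid M$ hypotheses are exactly what make the construction produce a bounded measure.
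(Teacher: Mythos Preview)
Your argument is correct, and in fact proves a strictly stronger statement than the paper does: your construction of the Amice transform
\[
A_\chi(T)=-\frac{\sum_{a}\chi(a)(1+T)^{\langle a\rangle}}{(1+T)^{M}-1}
\]
uses only that $\chi$ has mean value zero and that $p\nmid M$; the ``good'' hypothesis is never invoked in showing $A_\chi(T)\in\Z_p[[T]]$, and the density step likewise needs only $p\nmid M$. Thus you have actually established the congruence for every integer-valued, mean-zero periodic sequence $\chi$ with $(p,M)=1$.

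By contrast, the paper's proof is more elementary and explicit. It writes $L_\chi(-k)=-B_{k+1,\chi}/(k+1)$, expresses the generalized Bernoulli numbers as $p$-adic limits of Riemann sums
\[
B_{k,\chi}=\lim_{n\to\infty}\frac{1}{Mp^{n}}\sum_{a=0}^{Mp^{n}-1}\chi(a)a^{k},
\]
and then introduces a Mazur-style regularization $\mathbb L_\chi^{c}(f):=\mathbb L_\chi(f)-\mathbb L_{\chi^{c}}(f^{c})$ for an auxiliary $c$ coprime to $Mp$. A direct telescoping of the Riemann sums shows $\mathbb L_\chi^{c}(f)\equiv 0\pmod{p^{A}}$ for every such $c$. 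The ``good'' hypothesis then enters essentially: writing $\chi$ as a sum of blocks $\psi_{u,v}$ with $uC\equiv v\pmod M$, the authors choose two regularizers $c_1\equiv C\pmod M$ and $c_2\equiv 1\pmod M$ that are $p$-adically close, and subtract to cancel the regularization term and recover the congruence for $\mathbb L_\chi(f)$ itself. So the paper's method is self-contained and avoids any appeal to Mahler/Amice theory, at the cost of the extra combinatorial hypothesis on $\chi$; your method trades that hypothesis for the (standard) machinery of bounded $p$-adic measures, and thereby removes the ``good'' restriction altogether. Your parenthetical remark about Mazur-type regularizations is exactly the conceptual bridge between the two arguments, but as written your proof does not need it.
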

\begin{remark}
This theorem is a specialization of fairly standard facts on $p$-adic Dirichlet $L$-functions, or,
if one prefers, $p$-adic Mazur measures (see e.g. \cite{CourtieuPanchishkin}, 
\cite[Chapter 3]{Hida}). However, for the reader's convenience, we present a full and 
elementary proof below.
\end{remark}
\begin{proof}
We begin by recalling the definition of generalized Bernoulli numbers $B_{k,\chi}$, which are given by the generating function
\[
			\sum_{a=0}^{M-1} \chi(a) \frac{t e^{at}}{e^{Mt} - 1} =: \sum_{k \geq 0} B_{k,\chi} \frac{t^k}{k!}.
\]
As mentioned above, $L_{\chi}$ has an analytic continuation to all of $\C$. At non-positive integers, this is realized by the well-known relation
\[
L_\chi(-n) = -\frac{B_{n+1,\chi}}{n+1}.
\]	

Furthermore, it is possible  (see \cite[ChapterXIII, Theorem 1.2]{Lang}) to present these numbers as $p$-adic limits of related power sums:
\[
			B_{k,\chi} = \lim_{n \to \infty} \frac{1}{Mp^n} \sum_{a=0}^{Mp^n-1} \chi(a)a^k.
\]
We thus have that
\[
\bbL_\chi(x^k) = L_\chi(-k) = -\frac{B_{k+1,\chi}}{k+1}=-\lim_{n \rightarrow \infty}
\frac{1}{Mp^n} \sum_{a=0}^{Mp^n-1} \chi(a)\frac{a^{k+1}}{k+1}.
\]
Now let $c>1$ be a positive integer such that $(c,Mp)=1$, and define
\[
\bbL_\chi^c(f):=\bbL_\chi(f) - \bbL_{\chi^c}(f^c),
\] 
where the functions $f^c$ and $\chi^c$ are defined as $f^c(x):=f(cx)$ and $\chi^c(x):=\chi(cx)$.
We now write
\[
f(x)=\sum_{m=0}^Nd_m x^m
\]
with $d_m \in \bbQ$, and use the above to compute
\[
\bbL_\chi^c(f) = -\sum_{m=0}^Nd_m \lim_{n \rightarrow \infty}\frac{1}{Mp^n} \sum_{a=0}^{Mp^n-1} 
\left(\chi(a)\frac{a^{m+1}}{m+1} - \chi(ca)\frac{(ca)^{m+1}}{m+1} \right).
\]
Let $a_c$ be the element of $\{0,1, \ldots Mp^n-1\}$ such that $a_c\equiv ca \pmod{Mp^n}$.
Since $(c,Mp)=1$, multiplication by $c$ permutes residues modulo $Mp^n$, and we can rearrange the 
sum as
\[
\bbL_\chi^c(f) = -\sum_{m=0}^Nd_m \lim_{n \rightarrow \infty}\frac{1}{Mp^n} \sum_{a=0}^{Mp^n-1} 
\chi(ca)\left(\frac{a_c^{m+1} - (ca)^{m+1}}{m+1} \right).
\]
We now let $a_c=ca+Mp^nt_a$ with $t_a \in \bbZ$, make use of the congruence
\[
a_c^{m+1} - (ca)^{m+1} = (ca+Mp^nt_a)^{m+1} - (ca)^{m+1} \equiv (m+1)(ca)^mMp^nt_a \pmod{(Mp^n)^2},
\]
and reorder the summations to conclude that
\[
\bbL_\chi^c(f) = -\lim_{n \rightarrow \infty}\sum_{a=0}^{Mp^n-1} 
t_a\chi(ca)f(ca).
\]
In particular, we derive that for every $c>1$ such that $(c,Mp)=1$, 
\[
\bbL_\chi^c(f)  \equiv 0 \pmod{p^A}
\] 
whenever $f(n) \equiv 0 \pmod{p^A}$ for every $n \in \mathrm{supp}(\chi)$.

However, this does not suffice since we need the congruence for $\bbL_\chi(f)$ itself, not merely for the modified version $\bbL_\chi^c(f)  $. 
Since $\chi$ is a good function, it suffices to 
prove the congruence for the function
$\psi_{u,v}(n)$
where $u$ and $v$ satisfy the conditions of the definition of a good function.
We assume that $u$ and $v$ do satisfy these conditions for the remainder of the proof.
Now note that for an integer $c$ such that $(c,Mp)=1$ we have
\[
\psi_u^c(x) = \psi_u(cx)=\psi_{uc^{-1}(}x),
\]
where $cc^{-1} \equiv 1 \pmod M$.
We may now choose two integers $c_1>1$ and $c_2>1$ satisfying
\[
\begin{array}{c}
(c_1,Mp)=(c_2,Mp)=1\\
uc_1 \equiv v \pmod M \\
c_2 \equiv 1 \pmod M \\
c_1\equiv c_2  \pmod {p^C}
\end{array}
\]
for any large $C$, which we choose later.

We already know that 
\[
\bbL^{c_1}_{\psi_v}(f) = \bbL_{\psi_v}(f) - \bbL_{\psi_u}(f^{c_1}) \equiv 0 \pmod{p^A}
\]
and
\[
\bbL^{c_2}_{\psi_u}(f) = \bbL_{\psi_u}(f) - \bbL_{\psi_u}(f^{c_2}) \equiv 0 \pmod{p^A},
\]
and it suffices to prove that $C$ can be chosen big enough to guarantee the congruence 
\[
\bbL_{\psi_u}(f^{c_1})  - \bbL_{\psi_u}(f^{c_2}) = \bbL_{\psi_u}(f^{c_1}-f^{c_2}) \equiv 0 \pmod{p^A}.
\]
However, the assumptions on $u$ and $v$ imply that
\[
\bbL_{\psi_u}(f^{c_1}-f^{c_2} )= \sum_{m=0}^N d_m (c_1^m-c_2^m) \frac{B_{m+1,\psi_u}}{m+1}
\]
for some rational quantities $B_{m+1,\psi_u}/(m+1)$, which makes our claim obvious.

\end{proof}

\subsection{A useful formula for the sequence $H_{a,b,\chi}(n)$}
In this subsection, we relate the values of the sequences $H_{a,b,\chi}(n)$ to polynomials in $L$-values. Namely, we show the following result.
\begin{theorem}\label{xinumberlval}
Assume the conditions of Theorem \ref{mainthm}. Then we have 
\[H_{a,b,\chi}(n)=(-1)^n\mathbb L_\chi\left(x\binom{\frac{x^2-a^2}b}n\right).\]
\end{theorem}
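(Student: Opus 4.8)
The plan is to establish the asymptotic expansion of $P_{a,b,\chi}(e^{-t})$ explicitly in powers of $t$, read off the coefficients $\alpha_{a,b,\chi}(n)$ as $L$-values, and then convert the expansion in powers of $t$ into an expansion in powers of $(1-e^{-t})$. First I would write $P_{a,b,\chi}(e^{-t}) = \sum_{n\geq 0} n\chi(n) e^{-t(n^2-a^2)/b}$ and expand the exponential, giving formally $\sum_{k\geq 0} \frac{(-t/b)^k}{k!}\sum_{n\geq 0} n\chi(n)(n^2-a^2)^k$. The inner sums do not converge, but because $\chi$ has mean value zero they are \emph{Abel summable} to the corresponding $L$-values: the standard argument (the same one implicit in Zagier's ``strange identity'' philosophy, and the one underlying the $p$-adic limit formula used in the proof of Theorem~\ref{padic}) shows that $P_{a,b,\chi}(e^{-t})$ has the asymptotic expansion $\sum_{k\geq 0}\frac{(-1/b)^k}{k!}\,\mathbb{L}_\chi\!\left((x^2-a^2)^k x\right)\, t^k$ as $t\to 0^+$, since for a mean-value-zero periodic $\chi$ one has $\sum_{n\geq 1}\chi(n)n^j e^{-nt}\sim \sum_{k\geq 0}\frac{(-t)^k}{k!}L_\chi(-j-k)$. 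Here I am using that $\mathbb{L}_\chi(x^m)=L_\chi(-m)$ together with linearity to package the polynomial $(x^2-a^2)^k x$ inside $\mathbb{L}_\chi$; the shift by $-a^2/b$ in the exponent is harmless since $a,b$ are integers and only rescales $t$.

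Next I would pass from the $t$-expansion to the $(1-e^{-t})$-expansion. Set $w := 1-e^{-t}$, so $e^{-t}=1-w$ and $P_{a,b,\chi}(1-w)=\sum_{n\geq 0} n\chi(n)(1-w)^{(n^2-a^2)/b}$. Expanding $(1-w)^{(n^2-a^2)/b}$ by the binomial series gives $\sum_{k\geq 0}(-1)^k\binom{(n^2-a^2)/b}{k} w^k$, and hence, again invoking the mean-value-zero Abel-summation principle to interchange the sums,
\[
P_{a,b,\chi}(1-w)\sim \sum_{k\geq 0}(-1)^k\,\mathbb{L}_\chi\!\left(x\binom{\tfrac{x^2-a^2}{b}}{k}\right) w^k.
\]
Comparing with the defining relation $P_{a,b,\chi}(e^{-t})\sim\sum_n H_{a,b,\chi}(n)(1-e^{-t})^n = \sum_n H_{a,b,\chi}(n)w^n$ yields $H_{a,b,\chi}(n)=(-1)^n\mathbb{L}_\chi\!\left(x\binom{(x^2-a^2)/b}{n}\right)$, which is exactly the claim. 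One should note that $x\binom{(x^2-a^2)/b}{n}$ is a polynomial in $x$ of degree $2n+1$ with rational coefficients, so $\mathbb{L}_\chi$ applied to it makes sense and is a $\mathbb{Q}$-linear combination of the values $L_\chi(-m)=-B_{m+1,\chi}/(m+1)$; this simultaneously proves the claim made in the introduction that $H_{a,b,\chi}(n)\in\mathbb{Q}$.

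The main obstacle is making the two interchanges of summation rigorous, i.e. justifying that the formal manipulations above genuinely produce \emph{asymptotic} expansions rather than merely formal ones. The clean way is to prove once and for all the lemma that for any periodic $\chi$ of mean value zero and any integer $j\ge 0$, $\sum_{n\geq 1}\chi(n)n^j(1-w)^{g(n)}$ — with $g$ a fixed polynomial in $n$ with rational coefficients mapping $\mathbb{Z}$ suitably — admits an asymptotic expansion as $w\to 0^+$ whose coefficients are the $\mathbb{L}_\chi$-values obtained by naive term-by-term expansion. This follows from the fact that $L_\chi(s)$ is entire (being a finite combination of Hurwitz zeta functions, each of which is entire after subtracting the pole, and the poles cancel by mean value zero) together with a contour-integral/Mellin-transform representation: writing the sum as an inverse Mellin transform of $\Gamma(s)L_\chi(s)$ (or the appropriate analogue in the variable $w$) and shifting the contour past $s=0,-1,-2,\dots$, the residues at the non-positive integers produce precisely the claimed coefficients $L_\chi(-m)$, and the shifted integral gives the error term of the required order. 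Since the periodic sums are Abel/Borel summable and $L_\chi$ has no pole, there are no logarithmic or negative-power terms, so the expansion is a genuine power series in $t$ (equivalently in $w$), as asserted. This is the technical heart of the proof; once it is in place, the identity of Theorem~\ref{xinumberlval} is a formal consequence of matching coefficients.
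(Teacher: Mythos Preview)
Your approach is correct and reaches the same formula, but the route differs from the paper's. Both proofs begin identically: establish the asymptotic expansion of $P_{a,b,\chi}(e^{-t})$ in powers of $t$ with coefficients $\frac{(-1)^k}{k!b^k}\,\mathbb{L}_\chi\bigl(x(x^2-a^2)^k\bigr)$ via the standard Mellin/Euler--Maclaurin argument (the paper cites Lawrence--Zagier for this step). Where you diverge is in the conversion to the $(1-e^{-t})$-expansion. The paper expands $(1-e^{-t})^j$ in powers of $t$ using Stirling numbers of the second kind, compares coefficients to obtain a triangular linear system relating the $H_{a,b,\chi}(j)$ to the values $\mathbb{L}_\chi\bigl(x(x^2-a^2)^n\bigr)$, and then inverts that system using Stirling numbers of the first kind together with the identity $\sum_j s(n,j)y^j = (y)_n$ to recover the falling-factorial/binomial form. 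You bypass this entirely by substituting $w=1-e^{-t}$ at the level of the generating function and expanding $(1-w)^{(n^2-a^2)/b}$ directly by the generalized binomial series. Your route is shorter and more conceptual; the paper's has the minor advantage that, once the $t$-expansion is rigorously in hand, the Stirling inversion is pure finite algebra and needs no second appeal to Abel summation. In practice your second interchange of sums can be justified as a purely formal change of variables $t\mapsto w$ applied to the already-established $t$-asymptotic (since $e^{-yt}=(1-w)^y$ as formal power series), so no genuinely new analytic input is required beyond what you already invoked in the first paragraph.
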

\begin{proof}
Plugging in $q=e^{-t}$ into the definition of $P_{a,b,\chi}(q)$, we find
\[P_{a,b,\chi}\left(e^{-t}\right)=\sum_{n\geq1}n\chi(n)e^{-\frac{n^2-a^2}{b}t}.\]
Using a shifted version of the Euler-Maclaurin summation formula or a simple Mellin transform argument (see, for example, the proposition on page 98 of \cite{LawrenceZagier}), we find as $t\rightarrow0^+$ the asymptotic expansion 

\begin{equation}
\label{star}
\begin{aligned}
P_{a,b,\chi}\left(e^{-t}\right)&\sim e^{\frac{a^2}{b}t}\sum_{n\geq0}\frac{(-1)^nL_{\chi}(-2n-1)}{n!}\left(\frac{t}{b}\right)^n
\\ &=
\sum_{n\geq0}\frac{t^n}{n!b^n}\left(\sum_{j=0}^n(-1)^jL_{\chi}(-2j-1)a^{2(n-j)}\binom nj\right)
=\sum_{n\geq0}\frac{(-t)^n}{n!b^n}\mathbb L_{\chi}\left(x\left(x^2-a^2\right)^n\right).
\end{aligned}
\end{equation}

Now from the definition of the coefficients $H_{a,b,\chi}(n)$,  we have another expression for the asymptotic expansion of $P_{a,b,\chi}(e^{-t})$, given by 
\[P_{a,b,\chi}\left(e^{-t}\right)\sim\sum_{n\geq0}H_{a,b,\chi}(n)\left(1-e^{-t}\right)^n.\]
Letting $\left\{n\atop k\right\}$ denote the Stirling numbers of the second kind, which are defined by the generating series
\[\sum_{n\geq k}\frac{\left\{n\atop k\right\}x^n}{n!}:=\frac1{k!}\left(e^x-1\right)^k,\]
	we find
	
	\begin{equation}\label{doublestar}P_{a,b,\chi}\left(e^{-t}\right)\sim\sum_{n\geq0}\frac{(-t)^n}{n!}\left(\sum_{j=0}^n\left\{n\atop j\right\}j!H_{a,b,\chi}(j)(-1)^j\right).\end{equation}
By comparing coefficients of $t^n$ in (\ref{star}) and (\ref{doublestar}),
	we find
	
	\begin{equation}\label{xiformulainvert}
	\sum_{j=0}^n\left\{n\atop j\right\}j!H_{a,b,\chi}(j)(-1)^j=\frac{1}{b^n}\mathbb L_\chi\left(x\left(x^2-a^2\right)^n\right).
	\end{equation}
	
We now require the Stirling numbers of the first kind, which are defined by the relation
\[\sum_{j=0}^ns(n,j)x^j:=(x)_n,\]
where $(x)_n:=\prod_{j=0}^{n-1}(x-j)$ is the usual Pochhammer symbol.
Recall that the two types of Stirling numbers satisfy the inversion relationship
\begin{equation}\label{duality}\sum_{k=0}^ns(n,k)\left\{ k\atop j\right\}=\delta_{n,j},\end{equation}
where 
\[\delta_{x,y}:=\begin{cases}1&\text{ if }x=y,\\ 0&\text{ if }x\neq y.\end{cases}\]

Thus, if we have two sequences $u_n,v_n$ satisfying the relationship $v_n=\sum_{j=0}^n\left\{n\atop j\right\}u_j$, then we can easily invert to find 
\begin{equation}\label{inversion}
\sum_{k=0}^ns(n,k)v_k=\sum_{j=0}^nu_j\sum_{k=j}^ns(n,k)\left\{k\atop j\right\}=\sum_{j=0}^nu_j\sum_{k=0}^ns(n,k)\left\{k\atop j\right\}=\sum_{j=0}^nu_j\delta_{n,j}=u_n,
\end{equation}
where we used the fact that $\left\{k\atop j\right\}=0$ for $j>k$.

Combining (\ref{xiformulainvert}) and (\ref{inversion}), we obtain
	\[H_{a,b,\chi}(n)=\frac{(-1)^{n}}{n!}\mathbb L_\chi\left(x\sum_{j=0}^n\left(\frac{x^2-a^2}{b}\right)^js(n,j)\right)=\frac{(-1)^{n}}{n!}\mathbb L_{\chi}\left(x\left(\frac{x^2-a^2}{b}\right)_n\right),\]
which implies the desired formula for $H_{a,b,\chi}(n)$.

\end{proof}

\section{Proof of Theorem \ref{mainthm} and Corollary 1.2}\label{ProofMainThm}
We begin with a result giving a family of congruences for the sequences $H_{a,b,\chi}(n)$ which we will shortly see implies Theorem \ref{mainthm}. Firstly, however, we derive an elementary lemma on congruences of binomial coefficients. Specifically, we now recall Kummer's theorem, which allows us to easily study such congruences. 
\begin{theorem}[Kummer, \cite{Kummer}]\label{KummerThm}
Let $p$ be a prime, and suppose $n\in\Z$, $k\in\N$. Then the $p$-adic order of $\binom nk$ equals the number of carries when adding $k$ to $n-k$ in base $p$.
\end{theorem}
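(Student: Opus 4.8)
The plan is to derive Kummer's theorem from Legendre's formula for the $p$-adic valuation of a factorial. For a nonnegative integer $m$ write $m = \sum_j d_j p^j$ in base $p$ and set $s_p(m) := \sum_j d_j$. Legendre's formula,
\[
v_p(m!) = \sum_{i\ge 1}\left\lfloor \frac{m}{p^i}\right\rfloor = \frac{m - s_p(m)}{p-1},
\]
follows from the usual count of the multiples of $p, p^2, \dots$ among $1, \dots, m$, together with the telescoping identity obtained by summing the contribution $d_j\,\frac{p^j-1}{p-1}$ of each digit $d_j$. First assuming $n \ge 0$ (so that $n-k \ge 0$ as well, the case $n<k$ being trivial), applying this to $\binom{n}{k} = n!/(k!\,(n-k)!)$ gives
\[
v_p\!\binom{n}{k} = \frac{s_p(k) + s_p(n-k) - s_p(n)}{p-1}.
\]

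It then remains to identify the right-hand side with the number $c$ of carries produced when adding $k$ to $n-k$ in base $p$. This I would do by induction on the number of base-$p$ digits involved, carrying along the incoming carry bit: performing the schoolbook addition digit by digit, each carry out of a position replaces a partial digit-sum $\ge p$ by that value minus $p$ and injects a $1$ into the next position, so relative to adding digits with no carrying every one of the $c$ carries lowers the digit sum of the final result by exactly $p-1$. Since adding $k$ and $n-k$ with no carrying would produce digit sum $s_p(k)+s_p(n-k)$, while the actual result $n$ has digit sum $s_p(n)$, we obtain $s_p(n) = s_p(k) + s_p(n-k) - (p-1)c$, which is the desired formula for $c$.

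For general $n\in\Z$ with $n<0$ I would reduce to the case just handled via the identity $\binom{n}{k} = (-1)^k\binom{k-1-n}{k}$, interpreting the base-$p$ expansions $p$-adically (so that $n-k$ has eventually-$(p-1)$ digits and $k + (n-k) = n$ still holds, with the carry count possibly infinite precisely when $\binom{n}{k}=0$); one checks that the carry count for $k+(n-k)$ in this $p$-adic sense agrees with the classical carry count governing $\binom{k-1-n}{k}$, using that the $p$-adic digits of $-j$ are $p-1$ minus the digits of $j-1$. The only step requiring genuine care is the carry-counting identity of the second paragraph, and even there the difficulty is bookkeeping rather than anything substantive; everything else is immediate from Legendre's formula.
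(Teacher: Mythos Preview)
The paper does not actually prove Kummer's theorem; it merely states it with a citation to Kummer's original 1852 paper and then uses it as a black box in the proof of Lemma~\ref{KummerCor}. So there is no proof in the paper to compare against.

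Your argument via Legendre's formula $v_p(m!) = (m - s_p(m))/(p-1)$ is the standard modern proof and is correct for $n \ge 0$. The carry-counting identity $s_p(n) = s_p(k) + s_p(n-k) - (p-1)c$ is exactly right, and your justification (each carry trades $p$ in one column for $1$ in the next, lowering the total digit sum by $p-1$) is sound. For the case $n < 0$, your reduction via $\binom{n}{k} = (-1)^k\binom{k-1-n}{k}$ together with the $p$-adic reading of digits is the correct idea, though it is only sketched; one can also verify the $p$-adic carry formula directly by writing $n$ and $n-k$ in their (eventually $p-1$) $p$-adic expansions and checking that the digit-sum--carry relation still holds columnwise. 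In any event, the paper only ever applies Kummer's theorem in Lemma~\ref{KummerCor} to binomial coefficients $\binom{s+p\alpha}{p^A n - B}$ with $s \ge 0$ and $\alpha \in \N$, so the nonnegative case already suffices for its purposes.
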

 From this, one can easily obtain the following lemma (see also Lemma 3.4 of \cite{Straub}).
\begin{lemma}\label{KummerCor}
Let $p$ be a prime, $s\in\{0,1,\ldots,p-1\}$, and $\alpha\in\N$. Then the following are true.
\begin{enumerate}
\item
If $B\in\{1,2\ldots,p-s-1\}$, then for all $A,n\in\mathbb N$,
\[\binom{s+p\alpha}{p^An-B}\equiv0\pmod{p^A}.\]
\item
If $B\in\{1,2\ldots,p-1\}$ and $\alpha\not\equiv-1\pmod p$, then for all $A,n\in\mathbb N$, 
\[\binom{s+p\alpha}{p^An-B}\equiv0\pmod{p^{A-1}}.\]
\end{enumerate}
\end{lemma}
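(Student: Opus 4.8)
The plan is to derive Lemma~\ref{KummerCor} directly from Kummer's theorem (Theorem~\ref{KummerThm}) by carefully analyzing the base-$p$ digits of the two quantities being added. Write $\binom{s+p\alpha}{p^An-B} = \binom{N}{k}$ with $N := s + p\alpha$ and $k := p^An - B$, so that $N - k = s + p\alpha - p^A n + B$. By Kummer's theorem, $v_p\!\left(\binom{N}{k}\right)$ equals the number of carries when adding $k$ to $N-k$ in base $p$. The strategy is to show that the last $A$ base-$p$ digits of $k = p^An - B$ force carries to propagate, regardless of the digits of $\alpha$.

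First I would record the base-$p$ expansion of $k = p^A n - B$. Since $B \in \{1,\dots,p-1\}$ in both parts (and $B \le p-s-1 \le p-1$ in part (1)), we have $0 < B < p^A$, so the bottom $A$ digits of $p^A n - B$ are exactly the digits of $p^A - B$, namely: the units digit is $p - B$, and digits $1$ through $A-1$ are all equal to $p-1$ (this is the standard ``borrowing'' pattern); the digits of $k$ in positions $\ge A$ are just the digits of $n-1$... more precisely the digits of $n$ shifted, adjusted by the borrow, i.e.\ they are the digits of $(n-1) + $ (the top part), but the key point is only the bottom $A$ digits matter. Meanwhile $N - k = (s + B) + p\alpha - p^A n$; for the addition $k + (N-k) = N = s + p\alpha$, I track carries digit by digit. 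In position $0$: we add the units digit of $k$, which is $p-B$, to the units digit of $N-k$. Since $N$ has units digit $s$ and $s + B < p$ in part (1) (as $B \le p - s - 1$), the units digit of $N-k$ is $s + B$ (no borrow needed there), and $(p - B) + (s + B) = p + s \ge p$, producing a carry. In each of positions $1,\dots,A-1$, digit of $k$ is $p-1$, so adding anything plus an incoming carry produces $\ge p$ and hence an outgoing carry. This yields at least $A$ carries, giving part (1).

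For part (2), the subtlety is that $s + B$ may exceed $p - 1$, so there may not be a carry out of position $0$, costing us one carry; this is exactly why the conclusion is only mod $p^{A-1}$ and why the hypothesis $\alpha \not\equiv -1 \pmod p$ (equivalently: the units digit of $\alpha$ is not $p-1$) is needed. The argument is: positions $1,\dots,A-1$ of $k$ are all $p-1$, so once any carry enters position $1$ it cascades through all of positions $1,\dots,A-1$, giving $A-1$ carries; and a carry does enter — either it comes out of position $0$, or, if not, then the digit sum in position $0$ is $< p$, which combined with the digit of $N$ in position $1$ being the units digit of $\alpha$... here one checks that position $1$ still overflows because the digit of $k$ there is $p-1$ and the digit of $N$ there (units digit of $\alpha$) is $\le p-1$ with the constraint ruling out the one degenerate case. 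I would phrase this cleanly by splitting into whether position $0$ carries; in both sub-cases one exhibits $A-1$ carries among positions $1,\dots,A-1$.

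The main obstacle I expect is the bookkeeping in part (2): handling the interaction between the possible missing carry at position $0$ and the role of the hypothesis $\alpha \not\equiv -1 \pmod p$. Concretely, one must verify that when position $0$ does \emph{not} carry, position $1$ nonetheless does — this is where $\alpha$'s units digit being $\ne p-1$ enters, together with the fact that $k$'s digit in position $1$ is $p - 1$ and $k$ could also contribute via... actually the cleanest route is to note that \emph{either} we get a carry out of position $0$ (then positions $1,\dots,A-1$ all carry, total $\ge A-1$), \emph{or} we don't, in which case I show position $1$ overflows on its own: $k$ has digit $p-1$ in position $1$, $N = s + p\alpha$ has digit equal to the units digit of $\alpha$ in position $1$, and since no carry comes in, $(N-k)$'s digit in position $1$ is forced and the sum re-overflows unless the units digit of $\alpha$ is $0$ and there's no incoming carry — but the reference to Lemma~3.4 of \cite{Straub} suggests the bound can also just be quoted. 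Everything else is a routine digit computation, so I would keep the writeup short and lean on Kummer's theorem as the single engine.
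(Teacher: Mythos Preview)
Your approach is exactly the paper's: apply Kummer's theorem after computing the low base-$p$ digits of $k=p^An-B$ (units digit $p-B$, digits $1,\dots,A-1$ all equal to $p-1$), and then track carries. Part~(1) is fine and matches the paper verbatim. For part~(2) your case split is the right one, but you slipped in the bad case: when there is no carry out of position $0$ (so $m_0=n_0-k_0$ with no borrow), the failure occurs when the units digit of $\alpha$ equals $p-1$, not $0$. Indeed, with $k_1=p-1$ and $n_1=\alpha_0$, if $\alpha_0\neq p-1$ then $n_1<k_1$, forcing $m_1=\alpha_0+1>0$ and hence $m_1+k_1\ge p$; if $\alpha_0=p-1$ then $m_1=0$ and no carry. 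The paper organizes this slightly more cleanly by first invoking part~(1) to reduce to $k_0\le n_0$, then observing directly that $n_1\neq p-1$ forces $m_1>0$, which ignites the cascade through positions $1,\dots,A-1$.
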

\begin{proof}
Write $n:=s+p\alpha$ and $k:=p^An-B$, and denote the base $p$ coefficients of $n$ (resp. $k$) by $n_0,n_1,\ldots$ (resp. $k_0,k_1,\ldots$). By assumption, $n_0=s$ and $k_0=p-B$. Moreover, we have $k_1=k_2=\ldots=k_{A-1}=p-1$.
We now split into cases.

{\bf Proof of (1):} 
As $B\in\{1,2\ldots,p-s-1\}$, we have $k_0>n_0$. Denoting the base $p$ coefficients of $n-k$ by $m_0,m_1,\ldots$, we find that $m_0=p-k_0+n_0$, so that there is a carry when $m_0$ is added to $k_0$. Since $k_1=k_2=\ldots=k_{A-1}=p-1$, there are at least $A$ carries occur when $k$ is added to $n-k$, so by Theorem \ref{KummerThm}, we find the desired congruence.

{\bf Proof of (2):}
By part (1) of the Lemma, we may suppose that $B\in\{p-s,p-s+1,\ldots,p-1\}$, i.e., that $k_0\leq n_0$. Note that $m_0=n_0-k_0$, and by the assumption on $\alpha$, we find $n_1\neq p-1$, so that $m_1>0$. Hence, when adding $k$ to $n-k$, a carry occurs when adding $m_1$ to $k_1$. Since $k_2=k_3=\ldots=k_{A-1}=p-1$, there is also a carry when $m_i$ is added to $k_i$ for $i=2,\ldots A-1$, so that there are at least $A-1$ carries when $k$ is added to $n-k$. By Theorem \ref{KummerThm}, the result follows.
\end{proof}

In order to state our generalized version of Theorem \ref{mainthm}, we first set
\[S_{a,b,\chi,p}:=S:=\left\{s\in\N_0\colon s<p, \exists\ x\in\mathrm{supp}(\chi),\ x\not\equiv0\pmod p,\ \frac{x^2-a^2}b\equiv s\pmod p\right\},\]
and we define an analogous set 
\[S^*_{a,b,\chi,p}:=S^*:=\left\{s\in\N_0\colon s<p, \exists\ x\in\mathrm{supp}(\chi),\  \frac{x^2-a^2}b\equiv s\pmod p\right\}.\]
Then our main result is as follows.
\begin{theorem}\label{equivcondn}
Let $\chi$ be a good function and $p$ a prime, and suppose $a,b\in\Z$ are chosen so that $P_{a,b,\chi}\in\Z[[q]]$. Then the following are true.
\begin{enumerate}
\item If $B\in\{1,2,\ldots,p-1-\max_{s\in S^*}s\}$, then for all $n,A\in\N$, we have 
\[H_{a,b,\chi}\left(p^An-B\right)\equiv0\pmod{p^A}.\]
\item If $\beta\neq (p-1)$ and $(b,p)=1$, then for any $B\in\{1,2,\ldots,p-1-\max_{s\in S}s\}$ and for all $n,A\in\N$, we have
\[H_{a,b,\chi}\left(p^An-B\right)\equiv0\pmod{p^A}.\] 
\end{enumerate}
\end{theorem}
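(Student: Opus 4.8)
The plan is to combine the three ingredients already in hand: the $L$-value formula of Theorem~\ref{xinumberlval}, the transfer principle of Theorem~\ref{padic}, and the Kummer-type binomial estimates of Lemma~\ref{KummerCor}. By Theorem~\ref{xinumberlval},
\[
H_{a,b,\chi}(p^An-B)=(-1)^{p^An-B}\,\mathbb L_\chi(f),\qquad\text{where } f(x):=x\binom{\frac{x^2-a^2}{b}}{p^An-B}.
\]
The hypothesis $P_{a,b,\chi}\in\Z[[q]]$ forces $\tfrac{x^2-a^2}{b}\in\Z$ for $x\in\mathrm{supp}(\chi)$, so $\binom{(x^2-a^2)/b}{p^An-B}\in\Z$ there, and hence $f$ is a numerical polynomial on $\mathrm{supp}(\chi)$. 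Thus, by Theorem~\ref{padic} (which requires $(p,M)=1$), it is enough to prove the pointwise congruence $f(x)\equiv0\pmod{p^A}$ for every $x\in\mathrm{supp}(\chi)$, and everything reduces to a short case analysis on $x\bmod p$.

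For part (1), take any $x\in\mathrm{supp}(\chi)$ and write $\tfrac{x^2-a^2}{b}=s+p\alpha$ with $s\in\{0,1,\dots,p-1\}$ and $\alpha\ge0$. By definition $s\in S^*$, so $s\le\max_{s'\in S^*}s'$, and the hypothesis $B\le p-1-\max_{s'\in S^*}s'$ places $B$ in the range $\{1,\dots,p-s-1\}$. Lemma~\ref{KummerCor}(1) then gives $p^A\mid\binom{s+p\alpha}{p^An-B}$, so a fortiori $p^A\mid f(x)$, and part (1) follows. For part (2) we split on $x\bmod p$. If $x\not\equiv0\pmod p$, then the residue $s$ of $\tfrac{x^2-a^2}{b}$ now lies in $S$, so the (smaller) bound $B\le p-1-\max_{s'\in S}s'$ again puts $B$ in $\{1,\dots,p-s-1\}$ and Lemma~\ref{KummerCor}(1) finishes this case exactly as before. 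If instead $x\equiv0\pmod p$, then since $(b,p)=1$ we get $\tfrac{x^2-a^2}{b}\equiv-\tfrac{a^2}{b}\pmod{p^2}$; writing $\tfrac{x^2-a^2}{b}=s_0+p\alpha$, the unit digit $s_0$ and the residue $\alpha\bmod p$ coincide with the first two base-$p$ digits of $-\tfrac{a^2}{b}$, so $\alpha\equiv\beta\not\equiv-1\pmod p$ by the hypothesis $\beta\neq p-1$. Lemma~\ref{KummerCor}(2) then only yields $p^{A-1}\mid\binom{s_0+p\alpha}{p^An-B}$, but the extra explicit factor $x\equiv0\pmod p$ in $f$ supplies the missing power of $p$, so again $p^A\mid f(x)$.

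Having verified the hypotheses, Theorem~\ref{padic} gives $\mathbb L_\chi(f)\equiv0\pmod{p^A}$, hence $H_{a,b,\chi}(p^An-B)\equiv0\pmod{p^A}$, proving both parts; Theorem~\ref{mainthm} and Corollary~1.2 then follow by rephrasing the conditions on $S$ and $S^*$ in terms of Legendre symbols. I do not expect any single hard estimate here—the arithmetic is entirely packaged into Lemma~\ref{KummerCor} and Theorem~\ref{padic}—so the real work is bookkeeping: checking that $S$, $S^*$ and $\beta$ are exactly the data describing the residues $\tfrac{x^2-a^2}{b}\bmod p$ (and $\bmod p^2$ in the case $x\equiv0$), matching parts (1) and (2) to $S^*$ and $S$ respectively, and noticing the small miracle that the stray factor $x$ appearing in Theorem~\ref{xinumberlval} is precisely what rescues the weaker $p^{A-1}$ bound of Lemma~\ref{KummerCor}(2). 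The only other points requiring care are the integrality claim $\tfrac{x^2-a^2}{b}\in\Z$ on $\mathrm{supp}(\chi)$ (and the choice of nonnegative representatives there) and the reduction to, or separate treatment of, the case $(p,M)=1$.
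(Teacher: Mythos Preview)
Your proposal is correct and follows the same approach as the paper: express $H_{a,b,\chi}(p^An-B)$ via Theorem~\ref{xinumberlval} as $\pm\mathbb L_\chi(f)$ with $f(x)=x\binom{(x^2-a^2)/b}{p^An-B}$, reduce via Theorem~\ref{padic} to the pointwise congruence $f(x)\equiv0\pmod{p^A}$ on $\mathrm{supp}(\chi)$, and verify this by Lemma~\ref{KummerCor}, splitting in part~(2) on whether $p\mid x$ and using the explicit factor $x$ to recover the missing power of $p$.

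The one place where the paper is more careful than your write-up is exactly the point you flag at the end. You write $\tfrac{x^2-a^2}{b}=s+p\alpha$ with $s\in\{0,\dots,p-1\}$ and $\alpha\ge0$, but nothing guarantees $(x^2-a^2)/b\ge0$, and Lemma~\ref{KummerCor} is stated for $\alpha\in\N$. The paper does not try to decompose $y:=(x^2-a^2)/b$ directly; instead it picks $\alpha\in\N$ with $y\equiv s+p\alpha\pmod{p^{A+C}}$, where $C=\operatorname{ord}_p((p^An-B)!)$, so that $(y)_{p^An-B}\equiv(s+p\alpha)_{p^An-B}\pmod{p^{A+C}}$ and hence $\binom{y}{p^An-B}\equiv\binom{s+p\alpha}{p^An-B}\pmod{p^A}$. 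Then Lemma~\ref{KummerCor} applies to the surrogate $s+p\alpha$. (The same trick, working modulo $p^2$, is how the paper arranges $\alpha\not\equiv-1\pmod p$ in the case $p\mid x$.) This is the concrete content of your phrase ``choice of nonnegative representatives,'' and once you spell it out, your argument coincides with the paper's. Your other flagged point, the hypothesis $(p,M)=1$ in Theorem~\ref{padic}, is likewise used implicitly in the paper's proof.
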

\begin{proof}
We begin by splitting into cases.

{\bf Proof of (1):}
By Theorem \ref{padic} and Theorem \ref{xinumberlval}, it suffices to show that for $B\in\{1,2,\ldots,p-1-\max_{s\in S^*}s\},$ we have
\[x\binom{\frac{x^2-a^2}{b}}{p^An-B}\equiv0\pmod{p^A}\] for all $A,n\in \mathbb N$, $x\in \mathrm{supp}(\chi)$. 
For fixed $x$, let $y:=\frac{x^2-a^2}{b}$, and let $s\in\mathbb N$ be the reduction of $y$ modulo $p$ with $0\leq s<p$. We claim that we can choose $\alpha\in\mathbb N$ so that 
\[\binom y{p^An-B}\equiv\binom{s+p\alpha}{p^An-B}\pmod{p^A}.\]
For this, it is enough to choose $\alpha$ satisfying
\[(y)_{p^An-B}\equiv(s+p\alpha)_{p^An-B}\pmod{p^{A+C}},\]
where $C:=\operatorname{ord}_p\left(\left(p^An-B\right)!\right)$. Clearly, it suffices to choose $\alpha$ with 
\[y\equiv s+p\alpha\pmod{p^{A+C}}.\] Now we can set $y-s=:zp,$ where $z\in\mathbb Z$. 
The last equation is then equivalent to
\[\alpha\equiv z\pmod{p^{A+C-1}},\]
which is clearly possible to solve for $\alpha$.
Thus, for such an $\alpha$, we have 
\[\binom y{p^An-B}\equiv\binom{s+p\alpha}{p^An-B}\pmod{p^A}.\]
Now (1) of Lemma \ref{KummerCor} directly implies that 
\[\binom{s+p\alpha}{p^An-B}\equiv0\pmod{p^A}\]
for all $B\in\{1,2,\ldots,p-1-\max_{s\in S*}s\}$, $n\in\N$, $x\in\mathrm{supp}(\chi)$.

{\bf Proof of (2):}
 To complete the proof, it suffices to show that if $x\equiv0\pmod p$, $\beta\neq (p-1)$, and $(b,p)=1$, then for all $0\leq B\leq p-1$ $n\in\N$, we have
 \[x\binom{\frac{x^2-a^2}{b}}{p^An-B}\equiv0\pmod p.\] 
 In this case, we find that $y\equiv -\frac{a^2}b\pmod{p^2}$. Hence, we may choose $\alpha$ as above, and assume without loss of generality that $y\equiv s+p\alpha\pmod{p^2}$. As $\beta\neq (p-1)$, we have that $\alpha\not\equiv-1\pmod p$. By
 (2) of Lemma \ref{KummerCor}, we find that 
\[\binom{s+p\alpha}{p^An-B}\equiv0\pmod{p^{A-1}}.\]
Hence,
\[x\binom{\frac{x^2-a^2}{b}}{p^An-B}\equiv x\binom{s+p\alpha}{p^An-B}\equiv0\pmod{p^A},\]
as desired.

\end{proof}
We are now in a position to prove Theorem \ref{mainthm}. First, consider the sets
\[T_{a,b,\chi,p}:=T:=\left\{s\in\N_0\colon s<p, \exists\ x\in\Z,\ x\not\equiv0\pmod p,\ \frac{x^2-a^2}b\equiv s\pmod p\right\},\]
\[T^*_{a,b,\chi,p}:=T^*:=\left\{s\in\N_0\colon s<p, \exists\ x\in\Z,\ \frac{x^2-a^2}b\equiv s\pmod p\right\}.\]
Clearly, since $S\subset T$, and $S^*\subset T^*$ if $B$ satisfies the conditions of Theorem 3.3 with $S$ replaced by $T$ or with $S^*$ replaced by $T^*$, then the same congruence for $H_{a,b,\chi}$ holds. Theorem \ref{mainthm} follows from this observation, together 
with the following elementary result, whose proof follows from a straightforward calculation.
\begin{lemma}\label{equivalentcondns}
Let $a,b\in\Z$, let $p$ be a prime and suppose $(p,b)=1$. Then the following are equivalent conditions for $B\in\N$.
\begin{enumerate}
\item We have that $B$ satisfies
\[
			\left(\frac{a^2-b}{p}\right) =-1, \ \left(\frac{a^2-2b}{p}\right) =-1, \dots, \ \left(\frac{a^2-Bb}{p}\right) =-1.
		\]
\item We have that
\[B\in\{1,2,\ldots,p-1-\max_{t\in T^*}t\}.\]

\end{enumerate}
Moreover, the following are also equivalent conditions for $B\in\N$.

\begin{enumerate}
\item We have that $B$ satisfies
\[
			\left(\frac{a^2-b}{p}\right) \neq 1, \ \left(\frac{a^2-2b}{p}\right) \neq 1, \dots, \ \left(\frac{a^2-Bb}{p}\right) \neq 1.
		\]
\item We have that
\[B\in\{1,2,\ldots,p-1-\max_{t\in T}t\}.\]
\end{enumerate}

\end{lemma}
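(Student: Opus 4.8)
The plan is to recast each of the two conditions on $B$ as a statement about which residues modulo $p$ fail to lie in $T^*$ (respectively $T$), and then to note that both reductions coincide after the substitution $j\mapsto p-j$.

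First I would rewrite $T$ and $T^*$ in terms of Legendre symbols. Since $(p,b)=1$, for $0\leq s<p$ the congruence $\frac{x^2-a^2}{b}\equiv s\pmod p$ is equivalent to $x^2\equiv a^2+bs\pmod p$; this is solvable for some $x$ exactly when $a^2+bs$ is a square modulo $p$ (counting $0$ as a square), and solvable for some $x$ with $x\not\equiv0\pmod p$ exactly when $a^2+bs$ is a nonzero square modulo $p$. With the usual convention $\left(\frac{m}{p}\right)=0$ when $p\mid m$, this means
\[
s\in T^*\iff\left(\frac{a^2+bs}{p}\right)\neq-1,\qquad s\in T\iff\left(\frac{a^2+bs}{p}\right)=1.
\]
In particular $0\in T^*$, and since $a^2+bs$ runs over every residue modulo $p$ as $s$ does, $T$ contains some $s$ (namely one with $a^2+bs$ a nonzero square); hence $\max_{t\in T^*}t$ and $\max_{t\in T}t$ are well-defined.

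Next I would exploit the elementary fact that, for $1\leq j\leq B$ with $B\leq p-1$, the numbers $p-j$ are exactly the residues in $\{p-B,p-B+1,\dots,p-1\}$, and $a^2-jb\equiv a^2+b(p-j)\pmod p$. Hence the chain $\left(\frac{a^2-b}{p}\right)=-1,\dots,\left(\frac{a^2-Bb}{p}\right)=-1$ is, via $s=p-j$, equivalent to $\left(\frac{a^2+bs}{p}\right)=-1$ for every $s\in\{p-B,\dots,p-1\}$, i.e.\ to $\{p-B,\dots,p-1\}\cap T^*=\emptyset$. Because every element of $T^*$ lies in $\{0,\dots,p-1\}$, this is in turn equivalent to $\max_{t\in T^*}t<p-B$, i.e.\ to $1\leq B\leq p-1-\max_{t\in T^*}t$, which is exactly the asserted membership condition. (If $B\geq p$ the first chain fails already, since its term with $j=p$ reads $\left(\frac{a^2}{p}\right)=-1$, an impossibility; so no generality is lost in assuming $B\leq p-1$, and when $\max_{t\in T^*}t=p-1$ both sides are vacuous.) This proves the first equivalence, and the second follows verbatim after replacing ``$=-1$'' by ``$\neq1$'' and $T^*$ by $T$, using that $s\notin T$ iff $\left(\frac{a^2+bs}{p}\right)\neq1$. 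There is no genuine obstacle here: the only points that demand attention are the bookkeeping of the convention $\left(\frac{0}{p}\right)=0$ — which is exactly what distinguishes $T$ from $T^*$ for residues $s$ with $p\mid a^2+bs$ — and the observation that the displayed membership conditions on $B$ tacitly force $B\leq p-1$.
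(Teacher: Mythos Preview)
Your argument is correct and is exactly the ``straightforward calculation'' the paper alludes to without writing out. One small point: your claim that the second equivalence follows \emph{verbatim} needs a one-line patch in the range $B\geq p$. There your parenthetical for the first equivalence uses that $\left(\tfrac{a^2}{p}\right)=-1$ is impossible; after the verbatim replacement this would become ``$\left(\tfrac{a^2}{p}\right)\neq 1$ is impossible'', which fails when $p\mid a$. The repair is immediate: if $p\mid a$ then $0\notin T$, while you already noted $|T|=(p-1)/2\geq 1$, so $T$ meets $\{1,\dots,p-1\}$ and the chain in condition~(1) already fails at some $j\leq p-1$. With that adjustment the proof is complete.
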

\begin{remark}
An elementary argument shifting integers $x$ by multiples of $p$ shows that if $(p,M)=1$, then we have $S=T$ and $S^*=T^*$. Hence, for all but finitely many primes $p$, the congruences given in Theorem 3.3 are exactly the same as the congruences implied by Theorem 1.1.
\end{remark}

We now prove Corollary 1.2.
\begin{proof}[Proof of Corollary 1.2]
It suffices to check that if $a^2-b$ is not a square and $a^2-b\not\equiv3\pmod4$, then exactly $50\%$ of primes $p$ satisfy $\left(\frac{a^2-b}p\right)=-1$, since Theorem 1.1 then implies that $H_{a,b,\chi}(pn-1)\equiv0\pmod p$ for all $n\in\N$. To see that this is the case, note that given the conditions on $a^2-b$, the function $\left(\frac{a^2-b}\cdot\right)$ is a non-principal Dirichlet character, and hence takes on values $1$ and $-1$ equally often (as the sum over a complete set of representatives of residue classes modulo the modulus of the character is zero). Moreover, the values where this character are non-zero are exactly those values which are coprime to the modulus of the character. Hence, by the Chebatorev density theorem applied to primes in arithmetic progressions, exactly $50\%$ of primes satisfy the desired condition.
\end{proof}

\section{Fishburn numbers and Hikami's functions}\label{ExampleS}
In this section, we work out a particularly important family of examples of Theorem \ref{mainthm}. Specifically, we consider a collection of quantum modular forms whose beautiful properties were laid out by Hikami in \cite{Hikami}. For further important results ``inverting'' these functions and relating them to indefinite theta series and mock theta functions, see also \cite{BrysonOnoPitmanRhoades,HikamiLovejoy}. The quantum modular forms defined by Hikami are then given for $m\in\mathbb N$  and $\alpha\in\{1,2,\ldots,m-1\}$ by
	\[F_m^{(\alpha)}(q):=\sum_{k_1,k_2,\ldots,k_m=0}^{\infty}(q;q)_{k_m}q^{k_1^2+\ldots+k_{m-1}^2+k_\alpha+\ldots+k_{m-1}}\left(\prod_{\substack{i=1\\ i\neq \alpha}}^{m-1}\left[{k_i+1}\atop{k_i}\right]_q\right)\cdot\left[{k_{\alpha+1}+1}\atop{k_\alpha}\right]_q,\]
	where the usual $q$-binomial is defined by 
	\[\left[ n\atop k\right]_q:=\begin{cases}\frac{(q;q)_n}{(q;q)_{k}(q;q)_{n-k}}&\text{ if }0\leq k\leq n,\\ 0&\text{ otherwise}.\end{cases}\]
	We note that the function $F_1^{(0)}(q)$ reduces simply to Kontsevich's function $F(q)$. Moreover, it is clear from the definition that $F_m^{(\alpha)}\in\mathcal H$. The connection to partial theta functions is shown in (15) of \cite{Hikami}, which states 	that $F_m^{(\alpha)}$ shares an asymptotic expansion with a half-derivative of an Andrews-Gordon function. Specifically, Hikami shows that there is a strange identity connecting $F_m^{(\alpha)}$ and 
	\begin{equation}\label{HikamiStrange}-\frac12\sum_{n\geq1}n\chi_{8m+4}^{(\alpha)}(n)q^{\frac{n^2-(2m-2\alpha-1)^2}{8(2m+1)}},\end{equation}
	where 
	\[\chi_{8m+4}^{(\alpha)}(n):=\begin{cases}1&\text{ if }n\equiv 2m-2\alpha-1\pmod{8m+4},\\ -1&\text{ if }n\equiv 2m+2\alpha+3\pmod{8m+4},\\-1& \text{ if }n\equiv 6m-2\alpha+1\pmod{8m+4},\\ 1 & \text{ if }n\equiv 6m+2\alpha+5\pmod{8m+4},\\ 0&\text{ otherwise}.\end{cases}\]
We now note that $P_{2m-2\alpha-1,8(2m+1),\chi_{8m+4}^{(\alpha)}}\in\Z[[q]]$, since, as in (10) of \cite{Hikami}, 
\[\frac{1}{(q;q)_{\infty}}\sum_{n\geq0}\chi_{8m+4}^{(\alpha)}(n)q^{\frac{n^2-(2m-2\alpha-1)^2}{8(2m+1)}}=\prod_{\substack{n\geq1\\ n\not\equiv0,\pm(\alpha+1)\pmod{2m+1}}}\left(1-q^n\right)^{-1}.\]
Moreover, by the discussion in Section 1, in this case, the coefficients $H_{2m-2\alpha-1,8(2m+1),\chi_{8m+4}^{(\alpha)}}$ may be defined combinatorially as the coefficients of the expansion
		\[\sum_{n\geq0}\xi_{m}^{(\alpha)}(n)q^n:=F_m^{(\alpha)}(1-q),\]
		so that
		\[H_{2m-2\alpha-1,8(2m+1),\chi_{8m+4}^{(\alpha)}}(n)=\xi_{m}^{(\alpha)}(n).\]
	As an example, 
	consider 
	\[F_2^{(0)}(q)=\sum_{n\geq0}(q;q)_n\sum_{k=0}^nq^{k(k+1)}\left[ n\atop k\right]_q,\]
	so that the first few coefficients $\xi_2^{(0)}(n)$ are given by $1,2,6,23,109\ldots$. Numerical calculations suggest that the following congruences hold for all $n,A\in\N$:
	\[\xi_2^{(0)}\left(3^An-1\right)\equiv0\pmod{3^A},\]
	\[\xi_2^{(0)}\left(11^An-a\right)\equiv0\pmod{11^A}, \text{ where }a\in\{1,2,3\},\]
	\[\xi_2^{(0)}\left(13^An-a\right)\equiv0\pmod{13^A}, \text{ where }a\in\{1,2,3,4\}.\]
These congruences follow immediately from Theorem \ref{mainthm} and Lemma 3.4, along with a short computation, once the following result is checked.
\begin{lemma}
For any $m\in\N$, $a\in\{1,2,\ldots,m-2\}$, $\chi_{8m+4}^{(\alpha)}(n)$ is a good function. 
\end{lemma}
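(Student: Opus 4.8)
The goal is to show that each $\chi_{8m+4}^{(\alpha)}$ is a good function, i.e., that it can be written as a sum of functions $\psi_{u,v}$ (with modulus $M=8m+4$) for which there is a natural number $C$ coprime to $M$ with $uC\equiv v\pmod M$. Reading off the definition of $\chi_{8m+4}^{(\alpha)}$, the four residues on which it is supported are
\[
u_1:=2m-2\alpha-1,\quad u_2:=6m-2\alpha+1,\quad v_1:=2m+2\alpha+3,\quad v_2:=6m+2\alpha+5,
\]
where $\chi_{8m+4}^{(\alpha)}=+1$ on $u_1,v_2$ and $=-1$ on $v_1,u_2$. The natural pairing to try is $\chi_{8m+4}^{(\alpha)}=\psi_{u_1,v_1}+\psi_{u_2,v_2}$ (checking signs: $\psi_{u_1,v_1}$ contributes $+1$ at $u_1$ and $-1$ at $v_1$; $\psi_{u_2,v_2}$ contributes $+1$ at $u_2$... which has the wrong sign). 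So instead I would pair the two residues of each sign together: write $\chi_{8m+4}^{(\alpha)}=\psi_{u_1,v_1}+\psi_{v_2,u_2}$, so that $+1$ appears at $u_1$ and $v_2$ and $-1$ at $v_1$ and $u_2$, matching the definition exactly. Thus it suffices to produce $C_1$ with $u_1 C_1\equiv v_1\pmod M$ and $C_2$ with $v_2 C_2\equiv u_2\pmod M$, both coprime to $M=8m+4$.

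First I would record the key congruences mod $M=8m+4$. Note $v_1 - u_1 = 4\alpha+4 = 4(\alpha+1)$ and $u_1+v_1 = 4m+2$, so $v_1 \equiv -u_1 + (4m+2) \pmod M$; similarly $v_2 = 6m+2\alpha+5$ and $u_2 = 6m-2\alpha+1$ satisfy $u_2 + v_2 = 12m+6 \equiv 4m+2 \pmod M$ and $v_2-u_2 = 4\alpha+4$. Hence both pairs satisfy the same relation: each member of a pair is $\equiv (4m+2) - (\text{the other}) \pmod M$. So if I can find a single unit $C$ with $u_1 C \equiv v_1$, the structural symmetry will likely hand me the second relation as well (possibly with $C$ or its inverse). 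The candidate is suggested by the observation that $4m+2 = M/2$, and working mod the odd part and the $2$-part of $M$ separately via CRT: modulo $4$ one checks $u_1\equiv v_1 \pmod 4$ (since $v_1-u_1=4(\alpha+1)$), so $C\equiv 1\pmod 4$ works there; modulo the odd part $2m+1$ one needs $u_1 C \equiv v_1 \pmod{2m+1}$, and since $u_1 \equiv -2\alpha-1 \pmod{2m+1}$ and $v_1\equiv 2\alpha+3 \equiv 2\alpha+3 \pmod{2m+1}$ — here I'd check whether $u_1$ is itself a unit mod $2m+1$, which it is precisely when $\gcd(2\alpha+1, 2m+1)=1$; this may fail, so the cleaner route is to invoke the \textbf{Remark} following the definition of good function: if $\chi$ is supported on integers coprime to $M$ then it is automatically good.

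The main obstacle — and the step I would spend the most care on — is therefore verifying that $\gcd(u_i, M) = \gcd(v_i, M) = 1$, i.e., that all four support residues are coprime to $8m+4 = 4(2m+1)$. Since all four residues are odd (each is an even multiple of $m$ or $\alpha$ plus an odd constant — e.g. $u_1 = 2m-2\alpha-1$ is odd), coprimality to $4$ is automatic. It remains to check coprimality to $2m+1$: modulo $2m+1$ we have $u_1\equiv -(2\alpha+1)$, $v_1 \equiv 2\alpha+3 \equiv 2(\alpha+1)+1$, $u_2 \equiv 2m-2\alpha+1 \equiv -2\alpha \equiv -2\alpha \pmod{2m+1}$ wait — $u_2 = 6m-2\alpha+1 = 3(2m+1) - 2\alpha - 2 \equiv -2(\alpha+1)\pmod{2m+1}$, and $v_2 = 6m+2\alpha+5 = 3(2m+1)+2\alpha+2 \equiv 2(\alpha+1) \pmod{2m+1}$. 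So I'd need $\gcd(2\alpha+1, 2m+1) = 1$ and $\gcd(\alpha+1, 2m+1)=1$. For $\alpha$ in the stated range $\{1,\dots,m-2\}$ these need not hold in general (e.g. $m=7,\alpha=2$: $\gcd(\alpha+1,2m+1)=\gcd(3,15)=3$), so the blanket ``coprime'' route does \emph{not} work and I must genuinely exhibit the units $C_1, C_2$. The plan then is: set $C_1 := $ the unique residue mod $M$ that is $\equiv 1 \pmod 4$ and solves $-(2\alpha+1)\,C_1 \equiv 2(\alpha+1)+1 \pmod{2m+1}$ — but this again requires $2\alpha+1$ invertible mod $2m+1$. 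The correct fix is to observe that $\psi_{u,v}$ contributes to $\chi$ only through the \emph{pair}, and one is free to pair differently or to split further; concretely, since $v_1 \equiv -u_1 \pmod{2m+1}$ is \emph{false} but $v_1 + u_1 \equiv 4m+2 \equiv 2m+1 \equiv 0 \pmod{2m+1}$ — that IS true — we get $v_1 \equiv -u_1 \pmod{2m+1}$ after all, and combined with $v_1\equiv u_1\pmod 4$, CRT gives a $C_1$ with $C_1\equiv 1\pmod 4$, $C_1 \equiv -1 \pmod{2m+1}$, automatically a unit mod $M$, satisfying $u_1 C_1 \equiv v_1\pmod M$. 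Identically $u_2 + v_2 \equiv 0\pmod{2m+1}$ and $u_2\equiv v_2\pmod 4$, so the same $C_1$ (or $C_2 = C_1$) works for the second pair. Thus the proof reduces to: (i) the sign bookkeeping giving $\chi_{8m+4}^{(\alpha)} = \psi_{u_1,v_1} + \psi_{v_2,u_2}$; (ii) the two arithmetic facts $u_i + v_i \equiv 0\pmod{2m+1}$ and $u_i\equiv v_i\pmod 4$; (iii) CRT to produce the common unit $C$. I expect (ii) — pinning down the residues mod $4$ and mod $2m+1$ correctly, including the edge behavior of the $q$-binomial indices — to be where the only real computation lies, and the rest is formal.
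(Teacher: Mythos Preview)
Your proposal is correct and uses the same decomposition as the paper: with $x:=2m+1$, $y:=\alpha+1$, both you and the paper write $\chi_{8m+4}^{(\alpha)}=\psi_{x-2y,\,x+2y}-\psi_{3x-2y,\,3x+2y}$ (your $\psi_{u_1,v_1}+\psi_{v_2,u_2}$). The only difference is in the final verification: the paper checks $\gcd(x\pm 2y,4x)=\gcd(3x\pm 2y,4x)=\gcd(x,y)$ and tacitly uses that equal gcd with $M$ is equivalent to lying in the same $(\Z/M\Z)^\times$-orbit, whereas you explicitly produce the unit $C\equiv 1\pmod 4$, $C\equiv -1\pmod{2m+1}$ via CRT---a slightly more self-contained route to the same conclusion.
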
  
\begin{proof}
Let $x:=2m+1$ and $y:=\alpha+1$. Then the period of $\chi_{8m+4}^{(\alpha)}$ is $M=4x$, and by the definition of the sequence, it is clearly supported on odd integers. Furthermore, one easily checks that 
\[\chi_{8m+4}^{(\alpha)}=\psi_{x-2y,x+2y}-\psi_{3x-2y,3x+2y}.\]
Now it is easy to verify that
\[(4x,x-2y)=(4x,x+2y)=(x,y)\]
and 
\[(4x,3x-2y)=(4x,3x+2y)=(x,y).\] 
Hence, $\psi_{x-2y,x+2y}$ and $\psi_{3x-2y,3x+2y}$ both satisfy the conditions of a good function, so that $\chi_{8m+4}^{(\alpha)}$ is good as well.
\end{proof}
Thus, we may apply Theorem \ref{mainthm} to the coefficients $\xi_{m}^{(\alpha)}$, which directly implies congruences for $\xi_m^{(\alpha)}(n)$.
Using Corollary 1.2 and the fact that 
\[(2m-2\alpha-1)^2-8(2m+1)\equiv0,1\pmod4,\] we immediately deduce the following result.
\begin{corollary}\label{CorCor}
Choose $\alpha,m\in\N$ with $\alpha<m$ such that $(2m-2\alpha-1)^2-8(2m+1)$ is not a square. Then 
\[\xi_m^{(\alpha)}\left(p^An-1\right)\equiv0\pmod{p^A}\]
for all $n,A\in\N$ for at least $50\%$ of primes $p$.
\end{corollary}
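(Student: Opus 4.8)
The plan is to deduce Corollary \ref{CorCor} directly from Corollary 1.2 (the $50\%$ corollary in the introduction) applied to the specific data $a = 2m-2\alpha-1$, $b = 8(2m+1)$, and $\chi = \chi_{8m+4}^{(\alpha)}$. To do this, I need to verify that this data satisfies the hypotheses of Corollary 1.2, namely: (i) $\chi_{8m+4}^{(\alpha)}$ is a good function; (ii) $P_{a,b,\chi}\in\Z[[q]]$; (iii) $a^2 - b \not\equiv 3 \pmod 4$; and (iv) $a^2-b$ is not a square. Then Corollary 1.2 yields a linear congruence for $H_{a,b,\chi}(n) = \xi_m^{(\alpha)}(n)$ modulo at least $50\%$ of primes, and it remains to identify that this congruence is precisely $\xi_m^{(\alpha)}(p^An-1)\equiv 0 \pmod{p^A}$.

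First I would note that (i) is exactly the content of the preceding Lemma, and (ii) is the integrality statement established just before it using Hikami's product identity (10). For (iii), I would compute $a^2 - b = (2m-2\alpha-1)^2 - 8(2m+1)$ and observe that $(2m-2\alpha-1)^2$ is an odd square, hence $\equiv 1 \pmod 4$ (indeed $\equiv 1 \pmod 8$), while $8(2m+1) \equiv 0 \pmod 4$, so $a^2-b \equiv 1 \pmod 4$; this is the displayed congruence $(2m-2\alpha-1)^2-8(2m+1)\equiv 0,1\pmod 4$ quoted in the paper, and in particular $a^2-b \not\equiv 3 \pmod 4$. Condition (iv) is precisely the hypothesis imposed in the statement of Corollary \ref{CorCor}, so nothing needs to be checked there. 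Thus all four hypotheses of Corollary 1.2 hold.

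Next I would unwind what congruence Corollary 1.2 actually delivers. Tracing through its proof: under conditions (iii) and (iv), the character $\left(\frac{a^2-b}{\cdot}\right)$ is a nonprincipal quadratic Dirichlet character, so by Chebotarev (primes in arithmetic progressions) exactly $50\%$ of primes $p$ satisfy $\left(\frac{a^2-b}{p}\right) = -1$. For each such prime, part (1) of Theorem \ref{mainthm} with $B=1$ gives $H_{a,b,\chi}(p^A n - 1) \equiv 0 \pmod{p^A}$ for all $A,n\in\N$. Since $H_{2m-2\alpha-1,8(2m+1),\chi_{8m+4}^{(\alpha)}}(n) = \xi_m^{(\alpha)}(n)$ by the combinatorial identification established earlier in the section, this is exactly the asserted congruence for $\xi_m^{(\alpha)}$.

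This proof is essentially a bookkeeping argument — assembling previously established facts — so there is no serious obstacle. The only point requiring genuine (if routine) care is the mod-$4$ computation in (iii): one must be careful that $a = 2m-2\alpha-1$ is odd (which holds since $\alpha < m$ forces nothing about parity but $2m-2\alpha-1$ is visibly odd regardless), so that $a^2 \equiv 1 \pmod 8$ and the reduction $a^2 - b \equiv 1 \pmod 4$ is valid; the remaining ingredients are invoked verbatim from the Lemma, the integrality remark, and Corollary 1.2.
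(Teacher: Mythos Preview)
Your proposal is correct and follows essentially the same approach as the paper: the paper's own ``proof'' is the one-line remark preceding the corollary, namely that Corollary~1.2 applies because $(2m-2\alpha-1)^2-8(2m+1)\equiv0,1\pmod4$, together with the previously verified goodness of $\chi_{8m+4}^{(\alpha)}$ and integrality of $P_{a,b,\chi}$. Your write-up is simply a more explicit unpacking of this, including the (correct) observation that the congruence produced is specifically the $B=1$ case of Theorem~\ref{mainthm}.
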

\begin{remark}
The condition that $(2m-2\alpha-1)^2-8(2m+1)$ is not a square is necessary in Corollary \ref{CorCor}. For example, if $\alpha=1$, $m=7$, then we have $a^2=121$, $b=120$. Hence, Theorem \ref{mainthm} yields a congruence modulo $p$ only when $\left(\frac{a^2-b}p\right)=\left(\frac1p\right)\neq1$, which does not hold for any $p$.
\end{remark}

We now take a closer look at the congruences for $\xi(n)$, which inspired this paper. Using the well-known fact that for $p\geq5$, $\frac{p^2-1}{24}\in\Z$, we find that if $\frac{p^2-1}{24}\equiv s\pmod p$ with $0\leq s<p$, then 
\[\beta_p=\frac{\frac{p^2-1}{24}-s}{p}=\left\lfloor\frac {p^2-1}{24p}\right\rfloor\neq (p-1).\] By Theorem \ref{mainthm}, we find that 
\[\xi\left(p^An-1\right)\equiv0\pmod{p^A}\]
for all $A,n\in\N$ whenever
\[\left(\frac{-23}{p}\right) \neq 1.\]
Using quadratic reciprocity, it is easy to check that this occurs exactly when 
\[p=23\text{ or }p\equiv 5,7,10,11,14,15,17,19,20,21,22\pmod {23},\]
which is remarked in \cite{AndrewsSellers} and  \cite{Straub}.

Finally, we show that our set of congruences for $\xi(n)$ is the same as that which was given in \cite{Straub}. Indeed, one easily finds by an elementary argument that the set $S$ in Theorem 3.3 may be replaced by the set $T$ (since $(p,M)=1$). Comparing with Theorem 1.2 of \cite{Straub}, it suffices to show that the set of reductions of values of $\frac{x^2-1}{24}$ modulo $p$ as $x$ ranges over $\Z\setminus p\Z$ is equal to the set of reductions of pentagonal numbers $\frac12x(3x-1)$ modulo $p$ with $x\not\equiv0\pmod p$ for any prime $p\geq5$. 
However, this follows immediately by noting that $\frac{x^2-1}{24}$ becomes $\frac12x(3x-1)$ upon substituting $x$ with $6x-1$, which simply permutes the residue classes of $x$ modulo $p$. Thus, thanks to the extensive calculations of Straub \cite{Straub}, we have conjecturally given all linear congruences for $\xi(n)$. That is, following Andrews and Garvan and Straub, we make the following conjecture, which we leave as an important challenge for future work.
\begin{conjecture}
Let  $p$ be a prime. Then there exists a $B\in\N$ such that there is a congruence 
\[\xi\left(p^An-B\right)\equiv0\pmod{p^A}\]
for all $n$ precisely when 
\[p=23\text{ or }p\equiv 5,7,10,11,14,15,17,19,20,21,22\pmod {23}.\]
\end{conjecture}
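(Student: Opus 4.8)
\medskip
\noindent We conclude with a sketch of a possible approach to this conjecture. One implication is already in hand: when $\left(\frac{-23}{p}\right)\neq 1$, i.e.\ when $p=23$ or $p$ is a quadratic non-residue modulo $23$, the computation preceding the conjecture shows that part (2) of Theorem \ref{mainthm}, applied with $a=1$, $b=24$, $B=1$, yields $\xi\!\left(p^An-1\right)\equiv 0\pmod{p^A}$ for all $A,n\in\N$; and for $p=2$ and $p=3$ one checks $\left(\frac{-23}{p}\right)=1$, consistent with the prediction that these two primes admit no congruence. The substance of the conjecture is therefore the converse: if $p\geq 5$ and $\left(\frac{-23}{p}\right)=1$, then no $B\in\N$ produces a congruence $\xi\!\left(p^An-B\right)\equiv 0\pmod{p^A}$ valid for all $A,n$ (the primes $p=2,3$, which divide $b=24$ and so lie outside the scope of our method, would require a separate analysis of the $2$- and $3$-divisibility of the Fishburn numbers). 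Since such a congruence holds in particular for $A=1$, it suffices to show: for every $B\in\N$ there exists $n\in\N$ with $p\nmid\xi\!\left(pn-B\right)$.

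By Theorem \ref{xinumberlval} applied with $a=1$, $b=24$, $\chi=\chi_{12}$ (recall $\xi(n)=H_{1,24,\chi_{12}}(n)$ from Section \ref{ExampleS}), this reduces, for $p\geq 5$, to showing that the regularised $L$-value
\[
\mathbb{L}_{\chi_{12}}\!\left(x\binom{\frac{x^2-1}{24}}{\,pn-B\,}\right)
\]
is a $p$-adic unit for at least one $n$. The plan is to represent $\mathbb{L}_{\chi_{12}}$ as integration against the $p$-adic Mazur measure $\mu$ on $\Z_p^{\times}$ attached to $\chi_{12}$, so that $\mathbb{L}_{\chi_{12}}(g)=\int_{\Z_p^{\times}}g\,d\mu$ for polynomials $g$ (cf.\ the proof of Theorem \ref{padic}), and then to invoke Kummer's theorem: writing $pn-B=\sum_j k_jp^j$ in base $p$, Lucas' congruence gives $\binom{y}{pn-B}\equiv\prod_j\binom{y_j}{k_j}\pmod p$, so that $x\mapsto x\binom{(x^2-1)/24}{pn-B}$ reduces modulo $p$ to an explicit locally constant function of $x$. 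Because $\left(\frac{-23}{p}\right)=1$, there is an $x_0$ --- which may be taken coprime to $12$ by lifting modulo $12p$ --- with $(x_0^2-1)/24\equiv -1\pmod p$, and this $x_0$ contributes a $p$-adic unit to the integrand on the corresponding ball; the naive expectation is then that for a well-chosen $n$ the cancellation in the integral must fail.

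The main obstacle is precisely the phenomenon flagged in the third of the remarks following Theorem \ref{mainthm}: the integral of a $p$-adically large function can still be $p$-adically small. Even granting that the integrand is a unit at some point of $\mathrm{supp}(\chi_{12})$, the sign pattern of $\chi_{12}$ --- equal to $+1$ on $\pm 1$ and $-1$ on $\pm 5$ modulo $12$ --- could in principle force $\mathbb{L}_{\chi_{12}}\!\big(x\binom{(x^2-1)/24}{pn-B}\big)\equiv 0\pmod p$ for \emph{every} $n$. Ruling this out seems to demand genuinely new input: quantitative control of the joint behaviour modulo $p$ of the digit products $\prod_j\binom{y_j}{k_j}$ as $x$ ranges over the four residue classes modulo $12$ and $n$ varies, that is, a non-vanishing statement --- uniform over all primes with $\left(\frac{-23}{p}\right)=1$, rather than merely checked numerically as in \cite{Straub} --- for a family of $\chi_{12}$-twisted sums of binomial coefficients. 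Since $\chi_{12}$ is itself the residue of $\eta(q)=q^{1/24}(q;q)_{\infty}$, these twisted sums are, up to sign, the Fishburn numbers $\xi(pn-B)$ again, so a purely formal reduction is circular; the realistic routes appear to be either to refine the $q$-series manipulations underlying the work of Andrews--Sellers and Straub into such a uniform $p$-adic statement, or to pass through the indefinite theta / mock modular realisations obtained by ``inverting'' the Kontsevich function in \cite{BrysonOnoPitmanRhoades,HikamiLovejoy} and bring mod $p$ techniques for half-integral weight forms to bear. Carrying either program through --- in particular establishing the non-vanishing for \emph{all} primes with $\left(\frac{-23}{p}\right)=1$, not just those within computational reach --- is the crux, and is why the statement is left here as a conjecture.
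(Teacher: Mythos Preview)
The statement in question is a \emph{conjecture} in the paper, not a theorem: the paper offers no proof and explicitly leaves it ``as an important challenge for future work.'' Your proposal is, correspondingly, not a proof either --- and you say so yourself in the final sentence. So there is nothing to compare at the level of proofs: both the paper and your write-up treat the converse direction as open.

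That said, your discussion is a sensible elaboration of \emph{why} the converse is hard, and it aligns with the paper's own remarks. The forward implication you sketch (when $\left(\tfrac{-23}{p}\right)\neq 1$) is exactly what the paper establishes in the paragraph preceding the conjecture via Theorem~\ref{mainthm}(2). Your identification of the obstruction to the converse --- that integrating a $p$-adically large function against the Mazur measure can still yield something $p$-adically small --- is precisely the point the paper raises in the third remark after Theorem~\ref{mainthm}. Your observation that the reduction via Lucas' theorem ultimately circles back to the Fishburn numbers themselves is a fair diagnosis of why a purely formal argument stalls. None of this is wrong, but none of it closes the gap either; it remains a heuristic outline of possible approaches, which is all it claims to be.
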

We conclude by noting that there are other congruences for linear combinations of Fishburn numbers. Specifically, in unpublished work, Garthwaite and Rhoades observed that for all $n\in\N$, we have
\[\xi(5n+2)-2\xi(5n+1)\equiv0\pmod5,\]
\[\xi(11n+7)-3\xi(11n+4)+2\xi(11n+3)\equiv0\pmod{11}.\]
In Theorem 1.3 of \cite{Garvan}, Garvan established an infinite set of congruences which includes this example.
It is likely that the methods of this paper extend to prove these congruences of Garvan, and that similar congruences hold for other quantum modular forms. We leave the details to the interested reader.

\end{document}